\newtheoremstyle{normal}
{10pt}
{10pt}
{\normalfont}
{}
{\bfseries}
{}
{0.8em}
{\bfseries{\thmname{#1}\thmnumber{ #2}.\thmnote{ \hspace{0.5em}(#3)\newline}}}
\newtheoremstyle{kursiv}
{10pt}
{10pt}
{\itshape}
{}
{\bfseries}
{}
{0.8em}
{\bfseries{\thmname{#1}\thmnumber{ #2}.\thmnote{ \hspace{0.5em}(#3)\newline}}}
\theoremstyle{kursiv}
\newtheorem{definition}{Definition}[section]
\newtheorem{satz}[definition]{Theorem}
\newtheorem{lemma}[definition]{Lemma}
\newtheorem{korollar}[definition]{Corollary}
\newtheorem{proposition}[definition]{Proposition}
\theoremstyle{normal}
\newtheorem{bemerkung}[definition]{Remark}
\newtheorem{beispiel}[definition]{Example}
\newlength{\leftstackrelawd}
\newlength{\leftstackrelbwd}
\def\leftstackrel#1#2{\settowidth{\leftstackrelawd}%
{${{}^{#1}}$}\settowidth{\leftstackrelbwd}{$#2$}%
\addtolength{\leftstackrelawd}{-\leftstackrelbwd}%
\leavevmode\ifthenelse{\lengthtest{\leftstackrelawd>0pt}}%
{\kern-.5\leftstackrelawd}{}\mathrel{\mathop{#2}\limits^{#1}}}
\newcommand{\iR}{\mathbb{R}}
\newcommand{\R}{\mathbb{R}}
\newcommand{\N}{\mathbb{N}}
\newcommand{\iZ}{\mathbb{Z}}
\newcommand{\Z}{\mathbb{Z}}
\begin{document}
\pagenumbering{arabic}
\title{Aronson-B\'{e}nilan and Harnack estimates for the discrete porous medium equation}
\author{Sebastian Kr\"ass}
\email{sebastian.kraess@uni-ulm.de}
\author{Rico Zacher$^*$}
\thanks{$^*$Corresponding author}
\email[Corresponding author:]{rico.zacher@uni-ulm.de}
\address[Sebastian Kr\"ass, Rico Zacher]{Institute of Applied Analysis, Ulm University, Helmholtzstra\ss{}e 18, 89081 Ulm, Germany.}
\thanks{Sebastian Kr\"ass is supported by a PhD-scholarship of the ``Hanns-Seidel-Stiftung'', Germany. Rico Zacher was supported by the DFG (project number 355354916, GZ ZA 547/4-2).}
\thanks{{\bf Declarations:} On behalf of all authors, the corresponding author states that there is no conflict of interest. Data availability
statement: not applicable.}

\begin{abstract}
We consider the porous medium equation (PME) on a locally finite graph and identify suitable curvature-dimension (CD) conditions under which
a discrete version of the fundamental Aronson-B\'enilan estimate holds true for positive solutions of the PME. We also show
that these estimates allow to prove Harnack inequalities which are structurally similar to the continuous case. The new CD conditions
are illustrated with several concrete examples, e.g.\ complete and chain-like graphs. 
\end{abstract}
\maketitle
\noindent{\bf AMS subject classification (2020):} 35R02 (primary), 76S05, 35K55 (secondary)

${}$

\noindent{\bf Keywords:} porous medium equation, discrete space, curvature-dimension inequality, 
Aronson-B\'enilan estimate, Li-Yau inequality,
Harnack inequality, R\'enyi entropy

\section{Introduction}
In recent years, discrete versions of the celebrated Li-Yau inequality (\cite{Li,LY})
\begin{equation}\label{originalLiYau}
- \Delta (\log u) \leq \frac{d}{2t},\quad t>0,
\end{equation}
for positive solutions $u$ of the heat equation $\partial_t u = \Delta u$, $t>0$, on complete Riemannian manifolds
with nonnegative Ricci curvature and topological dimension $d$ ($\Delta$ denoting the Laplace-Beltrami operator), have been established under suitable curvature-dimension conditions on the underlying discrete structure (\cite{BHL,DKZ,MUN,MUN2}).
The analogue of the Li-Yau inequality in case of the  
porous medium equation (PME)
\begin{equation}
\label{MainEqu}
\partial_t u - \Delta u^m = 0, \quad t>0,
\end{equation}
with $m>1$ is the Aronson-B\'enilan estimate
\begin{equation}
\label{AronsBenOrig}
-\Delta \Big( \frac{m}{m-1}u^{m-1}\Big) \le  \frac{\kappa}{t},\quad t>0,
\end{equation}
where $\kappa = \frac{d}{2+d(m-1)}$. Observe that sending $m\to 1$ in \eqref{AronsBenOrig} leads to \eqref{originalLiYau}.
In 1979, Aronson and B\'enilan proved \eqref{AronsBenOrig} for positive smooth
solutions of \eqref{MainEqu} in $\iR^d$ (\cite{AB}). Corresponding results on manifolds were later obtained in \cite{LNV}, see also
\cite{PME} and \cite{CZ}. Estimate \eqref{AronsBenOrig} has been a key step in the development of the theory for the PME in $\iR^d$,
see the monograph by V\'azquez \cite{PME}.

One of the main objectives of this paper is to derive Aronson-B\'enilan type estimates for the PME in a {\em discrete} setting. 
Concerning existence and uniqueness as well as qualitative properties of solutions to the discrete PME in a rather general framework we refer to the recent work \cite{BSW}.

Let $X$ be a finite or 
countably infinite set. For functions $u:X \rightarrow \iR$, we consider the operator
\begin{equation}\label{eq:operator}
L u(x) = \sum\limits_{y \in X} k(x,y) \big( u(y) - u(x)\big), \quad x \in X,
\end{equation}
where $k: X \times X \to [0,\infty)$ is a (non-trivial) kernel such that for any $x\in X$ there are at most finitely many $y\in X$
with $k(x,y)>0$. The kernel $k$ naturally induces a locally finite, directed graph $(V,E)$ where $V = X$ is the set of vertices and $(x,y) \in E$ (the set of edges) if and only if $k(x,y) > 0$. The weight of an edge $(x,y)$ is then given by $k(x,y)$. 
The operator $L$ can be regarded as a generalized Laplacian on the graph $(V,E)$. Even though a larger part of our general results are valid for weighted and directed graphs, in our 
examples we focus on unweighted graph structures, i.e.\ $k$ is symmetric and only takes values in $\{0,1\}$. 
Clearly, the value of $k(x,x)$ for $x\in X$ does not play a role in \eqref{eq:operator}. We set $k(x,x)=0$, $x\in X$.
However, $L$ can also be interpreted as the generator of a continuous-time Markov chain with state space $X$. The transition rates are then given by $k(x,y)$ if $x \neq y$ and $k(x,x)=-\sum_{y\in X\setminus\{x\}}k(x,y)$ for any $x \in X$.

We consider positive solutions $u: (0,\infty) \times X \to (0,\infty)$ of the discrete porous medium equation
\begin{equation}
\partial_t u (t,x) - L u^m (t,x) = 0, \quad t>0,\,x\in X,
\label{PME}
\end{equation}
where $m>1$, and aim at proving discrete versions of \eqref{AronsBenOrig}. As in the continuous case, the 
{\em pressure} (cf.\ \cite{PME})
\begin{equation}
v = \frac{m}{m-1} u^{m-1}
\label{DefV}
\end{equation}
plays a key role in Aronson-B\'enilan type estimates. Note that \eqref{AronsBenOrig} just means that $-\Delta v\le \frac{\kappa}{t}$. If $u$ is a positive solution of the PME on $\iR^d$, then 
\begin{equation}
\partial_t v = (m-1) v \Delta v + \vert\nabla v \vert^2.
\label{ChainRuleV}
\end{equation}
This evolution equation for the pressure is the starting point in deriving \eqref{AronsBenOrig} in the classical case. The basic
idea is to apply $\Delta$ to \eqref{ChainRuleV}, to use Bochner's formula (with $|\cdot|_{HS}$ denoting the Hilbert-Schmidt norm)
\[
\Delta \vert\nabla v \vert^2=2\vert \nabla^2 v\vert_{HS}^2+2\nabla v\cdot \nabla\Delta v 
\]
and the inequality 
\begin{equation} \label{CDclass}
\vert \nabla^2 v\vert_{HS}^2\ge \frac{1}{d}(\Delta v)^2
\end{equation}
to show that $\Delta v$ is a supersolution
of a certain (degenerate) parabolic equation with a quadratic term $c (\Delta v)^2$ (with $c>0$) on the right-hand side,
for which $-\frac{\kappa}{t}$ is a solution. Applying {\em formally} the comparison principle yields the desired inequality
$-\Delta v\le \frac{\kappa}{t}$.

If one wants to follow the same strategy in the discrete case one is confronted with the difficulty that the chain rule (valid
for differential operators) fails to hold. This concerns both the derivation of the equation for $v$ and the second step where
the Laplacian is applied to this equation. Another problem is to find a suitable replacement for \eqref{CDclass}, which
is a simple example of a curvature-dimension (CD) inequality in the sense of Bakry and \'Emery, the condition $CD(0,d)$ with $d\in [1,\infty)$. The latter is based on
the $\Gamma$-calculus involving the carr\'e du champ operator $\Gamma$ and its iterate $\Gamma_2$ for generators
of Markov semigroups, see \cite{BBL,BGL,BL}, which also contain results on Li-Yau inequalities for Markov diffusion operators satisfying $CD(0,d)$.

For the discrete heat equation, i.e.\ \eqref{PME} with $m=1$, these difficulties could be overcome in 
\cite{BHL,DKZ,MUN,MUN2} by introducing various suitable new $CD$-conditions and identifying certain discrete identities which allow to bypass the lack of chain rule, see also \cite{We,WZ} for $CD$-conditions encoding positive curvature. In \cite{BHL}, the authors work with the square root of $u$ instead of $\log u$
and introduce an exponential $CD$-inequality. A more general calculus, which also leads to logarithmic estimates, is
studied in \cite{MUN,MUN2}. The most precise (and in some cases even optimal) Li-Yau type estimates for $\log u$ are obtained in \cite{DKZ}, where one of the key new ideas is to replace the square $(Lv)^2$ appearing in the classical $CD$-inequality
(and also used in \cite{BHL, MUN}) by a more general term $F(-Lv)$ involving a so-called $CD$-function $F$. This allows for more flexibility and other
relaxation functions than those of the form $c/t$ (as in \eqref{originalLiYau}), and it opens up a corresponding theory for
discrete jump operators with arbitrary long jumps like the fractional discrete Laplacian (\cite{KWZ}),
for which the classical Bakry-\'Emery condition $CD(0,d)$ is violated for all finite $d>0$ (\cite{SWZ1}).

In this paper, we follow the approach from \cite{DKZ}. The first step consists in determining the equation for the pressure. 
Let us recall that in \cite{DKZ}, it was shown that if $u$ is a positive solution of the discrete heat equation, then
\begin{equation} \label{logequ}
\partial_t \log u-L \log u=\Psi_\Upsilon(\log u),
\end{equation}
where $\Psi_\Upsilon (f) (x) = \sum_{y \in X} k(x,y) \Upsilon \big(f(y)-f(x)\big)$ with the function $\Upsilon (r) = e^r-1-r$.
Comparing \eqref{logequ} with the continuous case, where $\partial_t \log u-\Delta \log u=|\nabla \log u|^2$, one sees that 
$\Psi_\Upsilon(w)$ is here the natural discrete replacement for $|\nabla w|^2$ where $w=\log u$. For the discrete PME with
$m>1$,
we identify an operator $\tilde{\Psi}_\Upsilon^{(m)}$ (see Definition \ref{DefTildePsi}) with which the discrete substitute of the continuous pressure equation \eqref{ChainRuleV} takes the form
\begin{equation} \label{pressureequdiscrete}
\partial_t v = (m-1) v L v + \tilde{\Psi}_\Upsilon^{(m)} (v).
\end{equation}
Thus $\tilde{\Psi}_\Upsilon^{(m)} (v)$ replaces $|\nabla v|^2$, a phenomenon which also occurs in the dissipation term
of the R\'{e}nyi entropy, see Proposition \ref{basicprop} below. At this point we remark that Erbar and Maas were able to show that for $m\in (0,2]$ the gradient flow equation
of the R\'{e}nyi entropy with respect to a suitable non-local transportation metric is precisely given by the discrete PME (\cite{ERM}), thereby establishing a discrete counterpart to Otto's seminal contribution \cite{Otto}.

Next, we look at the evolution equation for $Lv$, which is of the form $\partial_t Lv=\mathcal{D}_{m} (u)$. 
Proceeding as in \cite{DKZ}, we introduce a new $CD$-condition, $CD_m (0,d)$ (cf.\ Definition \ref{DefCD}), which 
for any $x\in X$ allows
to estimate $\mathcal{D}_{m} (u)(x)$ from below by $\frac{1}{d}(Lv(x))^2$ for all positive functions $u$ for which $-Lv$
($v$ denoting the corresponding pressure) has a positive local maximum at $x\in X$. We then prove that for any
finite graph satisfying $CD_m (0,d)$ with some $d>0$ the discrete Aronson-Bénilan type estimate
\begin{align} \label{discreteAB}
-L v (t,x) \leq \frac{d}{t},\quad t>0,\,x\in X,
\end{align}
holds true for any positive solution $u$ to the PME on $(0,\infty) \times X$. We also present several positive examples of 
finite graphs that satisfy $CD_m (0,d)$, e.g.\ unweighted complete graphs with an arbitrary number of vertices $D \in \N$. However, there are also negative examples such as chain-like graphs (in particular the very important example of the discrete Laplacian on $\mathbb{Z}$) which fail to satisfy the condition $CD_m (0,d)$ for any $d>0$ at least if $m \geq 2$ and the chain contains more than 4 vertices. Here the case $m\in (1,2)$ remains an interesting open problem. 
Motivated by the failure of $CD_m (0,d)$ for the discrete Laplacian on $\iZ$ in the case $m\ge 2$, we further introduce a more general curvature-dimension condition, $CD_{m,\alpha} (0,d)$ with an additional parameter $\alpha \in [0,1]$ (cf.\ Definition \ref{DefCDGen}) and prove that the discrete Laplacian on $\mathbb{Z}$ satisfies $CD_{m,1} \big(0,\frac{1}{m-1}\big)$
for any $m>1$.

It should be pointed out that, in contrast to the (discrete) linear case $m=1$, where other than square functions may be advantageous as $CD$-functions, in the PME case the square is the natural choice as this leads to a relaxation function
of the form $c/t$. This is the natural form in view of the scaling property that $\lambda u(\lambda^{m-1}t,x)$ solves
the PME for any $\lambda>0$ provided $u$ is a positive solution of the PME. In fact, assuming an estimate of the form
$-\Delta v\le C/t^\gamma$ this scaling property implies that $\gamma=1$.

Inserting the Aronson-B\'enilan estimate \eqref{discreteAB} into the equation for the pressure \eqref{pressureequdiscrete} yields the 
{\em differential} Harnack type inequality
\begin{equation} \label{DiffHar}
\partial_t v \ge \tilde{\Psi}_\Upsilon^{(m)} (v)-\frac{(m-1)d v}{t},\quad t>0,\,x\in X,
\end{equation}
which is the discrete version of the corresponding inequality
\[
\partial_t v \ge |\nabla v|^2-\frac{(m-1)d v}{t}
\]
from the continuous case. We prove that from \eqref{DiffHar} one can deduce a Harnack inequality structurally
similar to the continuous case, which has been studied by Auchmuty and Bao in \cite{AUB}. 
In fact, assuming that the underlying (finite) graph is connected and undirected, and denoting by $k_{\min}$ the minimal positive
weight of an edge, \eqref{DiffHar} implies that
\begin{equation}
t_1^\mu v(t_1,x_1) \leq t_2^\mu v(t_2,x_2) + \frac{2 d(x_1,x_2)^2 \left(t_2^{\mu+1}-t_1^{\mu+1} \right)}
{(\mu+1) k_{\min} \left(t_2-t_1 \right)^2},\quad 0<t_1<t_2,\,x_1,x_2\in X,
\label{HarnackIntro}
\end{equation} 
where $\mu=d(m-1)$ and $d(x_1,x_2)$ is the distance of $x_1$ and $x_2$ defined as the minimal length of all connecting paths. The proof is partly inspired by \cite{AUB},
but also uses discrete techniques that generalize certain estimates from \cite{DKZ} from the case $m=1$. Note that
a Harnack inequality can also be inferred from an Aronson-B\'enilan estimate resulting from the $CD$-condition
$CD_{m,\alpha} (0,d)$ with $d>0$ and $\alpha \in [0,1)$. In the case $\alpha=1$ one only gets the inequality
$\partial_t v\ge -\frac{d(m-1)v}{t}$, which is too weak to deduce a Harnack inequality as the 
'gradient term' $\tilde{\Psi}_\Upsilon^{(m)} (v)$ is missing. This concerns in particular the discrete Laplacian on $\iZ$, for which 
the question, whether an Aronson-B\'enilan estimate similar to the continuous case holds, remains an interesting open problem.

The article is organised as follows. In Section 2, we define the operator $\tilde{\Psi}_\Upsilon^{(m)}$ and derive the evolution equation for the pressure $v$. In Section 3, we introduce the curvature-dimension condition $CD_m (0,d)$ and provide some important positive examples. Section 4 is devoted to chain-like graphs and also includes a discussion of the discrete Laplacian on $\Z$. Moreover, the more general condition $CD_{m,\alpha} (0,d)$ is introduced and shown to be valid for the discrete Laplacian with $\alpha=1$ and $d=\frac{1}{m-1}$. In Section 5, we derive Aronson-Bénilan estimates for finite graphs
satisfying $CD_{m,\alpha} (0,d)$. Finally, in Section 6, Harnack inequalities are deduced from the Aronson-Bénilan estimates

\section{Evolution equation for the pressure $v$}
Let $m>1$, $X$ be a countable set (e.g.\ a finite set) and $k: X \times X \to [0,\infty)$ be nontrivial. For simplicity and with regard to our examples we assume that for any $x\in X$ there are at most finitely many points $y\in X$ with $k(x,y)>0$. Let $L$ be the operator associated with $k$ given by \eqref{eq:operator}. Given $H:\iR\to \iR$ we
further define for functions $w: X\to \iR$ and $x\in X$
\[
\Psi_H(w)(x)=\sum_{y\in X}k(x,y)H\big(w(y)-w(x)\big),
\]
cf.\ \cite[Section 2]{DKZ}.

The following definition introduces an operator which can be viewed in several respects as a discrete replacement for the operator $\vert \nabla v \vert^2$ in the context of the discrete PME.
 
\begin{definition}
\label{DefTildePsi} Let $m>1$ and the function $\tilde{\Upsilon}: \R \to \R$ be given by 
\[
\tilde{\Upsilon} (r) = \frac{(m-1)^2}{m} \Upsilon \big(\frac{m}{m-1} r \big) - (m-1) \Upsilon (r),
\]
with $\Upsilon(r)=e^r-1-r$, $r\in \iR$.
For functions $u: X\to (0,\infty)$ we define
\begin{align*}
\tilde{\Psi}_{\Upsilon}^{(m)}& (u)(x) = u^2(x)\Psi_{\tilde{\Upsilon}} (\log u)(x) \\&=\sum_{y \in X} k(x,y) \Big(\frac{m-1}{m} u^2(x) + \frac{(m-1)^2}{m} u^\frac{m-2}{m-1} (x) u^\frac{m}{m-1} (y) - (m-1) u(x) u(y)\Big). 
\end{align*}
\end{definition}

The importance of the operator $\tilde{\Psi}_{\Upsilon}^{(m)}$ becomes clear with the following proposition, which
states that this operator replaces $\vert \nabla v \vert^2$ from the continuous setting in two significant places:
(i) in the evolution equation for the pressure and (ii) in the dissipation term of the R\'{e}nyi entropy.
\begin{proposition} \label{basicprop}
Let $m>1$ and $u:(0,\infty)\times X\rightarrow (0,\infty)$ be a solution to $\partial_t u - Lu^m = 0$ on $(0,\infty) \times X$. Then the following holds true.
\begin{enumerate}
\item[(i)] The pressure function $v = \frac{m}{m-1} u^{m-1}$ solves the evolution equation
\begin{equation}
\partial_t v = (m-1) vLv + \tilde{\Psi}_\Upsilon^{(m)} (v)\quad \mbox{on}\;(0,\infty)\times X.
\label{DiscreteChainRuleV}
\end{equation}
\item[(ii)] Suppose in addition that $u(t,\cdot) \in \ell^{m} (X)$ for all $t>0$ and let $\mu$ denote an invariant and reversible finite measure on $X$ w.r.t.\ $L$, with density $\pi$ w.r.t.\ to the counting measure, that is in particular the detailed
balance condition
\[
k(x,y)\pi(x)=k(y,x)\pi(y),\quad x,y\in X,
\]
is satisfied. Then 
\begin{align}
\frac{d}{dt} \int_X \frac{u^m}{m(m-1)} d\mu = -\frac{1}{m} \int_X u \tilde{\Psi}_\Upsilon^{(m)} (v) d\mu,
\quad t>0.
\label{DiscreteEntropyEqu}
\end{align}
\end{enumerate}
\end{proposition}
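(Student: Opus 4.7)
For part (i), the plan is to verify \eqref{DiscreteChainRuleV} pointwise in $x$ by direct expansion in terms of $u$. The ordinary time chain rule together with the PME gives $\partial_t v = m u^{m-2} \partial_t u = m u^{m-2} L u^m$. Since $v = \tfrac{m}{m-1} u^{m-1}$, applying $L$ from \eqref{eq:operator} yields $Lv(x) = \tfrac{m}{m-1}\sum_y k(x,y)\big(u^{m-1}(y) - u^{m-1}(x)\big)$, so
\[
\partial_t v(x) - (m-1) v(x) Lv(x) = \sum_{y} k(x,y) \Big[ m u^{m-2}(x) \big(u^m(y) - u^m(x)\big) - \tfrac{m^2}{m-1} u^{m-1}(x) \big(u^{m-1}(y) - u^{m-1}(x)\big) \Big].
\]
It then remains to check that this matches $\tilde{\Psi}_\Upsilon^{(m)}(v)(x)$. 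Inserting $v = \tfrac{m}{m-1}u^{m-1}$ into the explicit three-term form in Definition \ref{DefTildePsi} --- using $v^2 = \tfrac{m^2}{(m-1)^2} u^{2(m-1)}$, $v^{\frac{m}{m-1}} = \big(\tfrac{m}{m-1}\big)^{\frac{m}{m-1}} u^m$, and $v^{\frac{m-2}{m-1}} = \big(\tfrac{m}{m-1}\big)^{\frac{m-2}{m-1}} u^{m-2}$ --- the $\big(\tfrac{m}{m-1}\big)^{\bullet}$ prefactors telescope via identities such as $\tfrac{(m-1)^2}{m}\cdot\tfrac{m^2}{(m-1)^2} = m$ and $\tfrac{m^2}{m-1} - m = \tfrac{m}{m-1}$, and the summands agree term by term.

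For part (ii), the strategy is to reduce both sides of \eqref{DiscreteEntropyEqu} to the same symmetric double sum built from the product $(u^{m-1}(y)-u^{m-1}(x))(u^m(y)-u^m(x))$. Differentiating under the integral (justified by $u(t,\cdot)\in\ell^m(X)$, finiteness of $\mu$, and local finiteness of $k$, via dominated convergence) and using the PME gives
\[
\frac{d}{dt} \int_X \frac{u^m}{m(m-1)} \, d\mu = \frac{1}{m-1} \int_X u^{m-1} L u^m \, d\mu.
\]
Detailed balance then furnishes the discrete Green identity
\[
\int_X f L g \, d\mu = -\frac{1}{2} \sum_{x,y \in X} k(x,y) \pi(x) \big(f(y) - f(x)\big) \big(g(y) - g(x)\big),
\]
which, applied with $f = u^{m-1}$ and $g = u^m$, turns the right-hand side into $-\tfrac{1}{2(m-1)}$ times exactly this symmetric sum.

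To close the loop, I start from the $u$-representation of $\tilde{\Psi}_\Upsilon^{(m)}(v)(x)$ derived in (i), multiply by $u(x)$, and integrate against $\mu$. Individually, the three resulting summands are not symmetric in $(x,y)$, but the weight $k(x,y)\pi(x)$ is symmetric in $(x,y)$ by reversibility, so each summand may be replaced by the average of itself and its $(x\leftrightarrow y)$-swap. After this symmetrization, the pure $u^{2m-1}$ piece produces $\tfrac{m}{2(m-1)}\big(u^{2m-1}(x)+u^{2m-1}(y)\big)$, while the two crossed terms combine with net coefficient $\tfrac{m}{2} - \tfrac{m^2}{2(m-1)} = -\tfrac{m}{2(m-1)}$ and contribute $-\tfrac{m}{2(m-1)}\big(u^{m-1}(x)u^m(y) + u^{m-1}(y)u^m(x)\big)$. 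These four pieces reassemble into $\tfrac{m}{2(m-1)}(u^{m-1}(y)-u^{m-1}(x))(u^m(y)-u^m(x))$, and comparing with the entropy derivative above yields \eqref{DiscreteEntropyEqu} with the correct factor $-\tfrac{1}{m}$.

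The main obstacle is purely algebraic bookkeeping: the constants built into $\tilde{\Psi}_\Upsilon^{(m)}$ and into the definition of $v$ interact non-trivially but always collapse, reflecting the fact that the identity in (ii) is the discrete shadow of the continuous computation $\int u \Delta u^m \, dx = -\tfrac{m}{m-1} \int u \,|\nabla u^{m-1}|^2 \, dx$, with $\tilde{\Psi}_\Upsilon^{(m)}(v)$ serving as the discrete surrogate for $|\nabla v|^2$. The only genuinely analytic point is the interchange of $\tfrac{d}{dt}$ and $\sum$ in the infinite-support case, which is routine under the stated $\ell^m$-hypothesis.
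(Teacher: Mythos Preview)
Your argument is correct and takes a genuinely different route from the paper's proof in both parts.

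For (i), the paper works through logarithms: it invokes the identity $L(u^m) = m u^m L\log u + u^m \Psi_{\Upsilon_m}(\log u)$ from \cite[Example 2.4]{DKZ}, rewrites everything in terms of $\log v$, and then uses $L\log v = \tfrac{Lv}{v} - \Psi_\Upsilon(\log v)$ to extract the $(m-1)vLv$ term. You instead verify the identity by brute-force expansion in powers of $u$, which is more elementary and self-contained (no external lemma needed) at the cost of being less transparent about the relation to the $\Upsilon$-calculus.

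For (ii), the paper first uses reversibility to pass from $\int u^{m-1} Lu^m\,d\mu$ to $\int u^m Lu^{m-1}\,d\mu$, then applies \cite[Lemma 2.1]{DKZ} with $H(r)=mr^{(m-1)/m}$ to express $Lu^{m-1}$ in terms of $Lu^m$ plus a correction $\Lambda_H$, and after some algebra identifies this correction with $-\tfrac{1}{m}u\,\tilde{\Psi}_\Upsilon^{(m)}(v)$. Your approach is more direct: you reduce both sides to the same symmetric Dirichlet-form expression $\sum_{x,y} k(x,y)\pi(x)(u^{m-1}(y)-u^{m-1}(x))(u^m(y)-u^m(x))$, the left side via the discrete Green identity and the right side via symmetrization of $u\,\tilde{\Psi}_\Upsilon^{(m)}(v)$. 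This is arguably cleaner and makes the structural analogy with $\int u^{m-1}\Delta u^m = -\int \nabla u^{m-1}\cdot\nabla u^m$ completely explicit, whereas the paper's route keeps the $\Lambda_H$ machinery in the foreground.
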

\begin{proof}
(i) From \cite[Example 2.4]{DKZ} we know that
\begin{equation*}
L (u^m) = m u^m L \log (u) + u^m \Psi_{\Upsilon_m} (\log u) = u^m \Psi_{\Upsilon^\prime} \big(m \log(u)\big),
\end{equation*}
where $\Upsilon_\beta(r)=\Upsilon(\beta r)$, $r\in \iR$, $\beta>0$.
This yields
\begin{align*}
\partial_t v &= m u^{m-2} \partial_t u = m u^{m-2} L u^m \notag = m^2 u^{2m-2} \Big(L \log u + \frac{1}{m} \Psi_{\Upsilon_m} \big(\log u \big) \Big) \notag \\ &= m^2 u^{2m-2} \Big(\frac{1}{m-1} L \log v + \frac{1}{m} \Psi_{\Upsilon_{\frac{m}{m-1}}} (\log v) \Big) \notag \\ &= (m-1)^2 v^2 \Big(\frac{1}{m-1} \Big(\frac{L v}{v} - \Psi_\Upsilon (\log v) \Big)+\frac{1}{m} \Psi_{\Upsilon_{\frac{m}{m-1}}} (\log v) \Big) \notag \\ &= (m-1) v Lv + \tilde{\Psi}_\Upsilon^{(m)} (v). 
\end{align*}
(ii) 
Note that $\int_X f d\mu=\sum_{x\in X} f(x) \pi(x)$ for any $f\in L^1(X,\mu)$. We have 
\begin{align*}
\frac{d}{dt} \int_X \frac{u^m}{m(m-1)} d\mu &= \frac{1}{m-1} \int_X u^{m-1} L u^m d\mu = \frac{1}{m-1} \int_X u^m L u^{m-1} d \mu,
\end{align*}
by reversibility of $\mu$. Next, applying \cite[Lemma 2.1]{DKZ} to the function $H(r) = m r^\frac{m-1}{m}$ we see
that
\begin{align*}
L H(u^m)(x) = H^\prime \big(u^m(x)\big) Lu^m (x) + \sum_{y \in X} k(x,y) \Lambda_H \big(u^m(y),u^m(x)\big)
\end{align*}
for any $x \in X$, where $\Lambda_H (w,z) = H(w)-H(z)-H^\prime (z)(w-z), \ w,z \in \R$. This is equivalent to 
\begin{align*}
L (mu^{m-1})(x) = (m-1) \frac{Lu^m (x)}{u(x)}& + \sum_{y \in X} k(x,y) \Big(mu^{m-1}(y)-mu^{m-1}(x)\\ &-(m-1)\big(\frac{u^m(y)}{u(x)}-u^{m-1}(x)\big)\Big).
\end{align*}
Thus,
\begin{align*}
\frac{d}{dt} &\int_X \frac{u^m}{m(m-1)} d\mu \\&= \frac{1}{m} \int_X u^{m-1} Lu^m d\mu + \frac{1}{m(m-1)} \int_X u^m (x) \sum_{y \in X} k(x,y)  \Lambda_H \big(u^m(y),u^m(x)\big) d\mu(x),
\end{align*}
which is equivalent to
\begin{align*}
\Big(1-\frac{m-1}{m}\Big) \frac{d}{dt} &\int_X \frac{u^m}{m(m-1)} d\mu = \frac{1}{m(m-1)} \int_X u^m (x) \sum_{y \in X} k(x,y)  \Lambda_H \big(u^m(y),u^m(x)\big) d\mu(x).
\end{align*}
Hence,
\begin{align*}
\frac{d}{dt} &\int_X \frac{u^m}{m(m-1)} d\mu = \frac{1}{m-1} \int_X u^m (x) \sum_{y \in X} k(x,y)  \Lambda_H \big(u^m(y),u^m(x)\big) d\mu(x) \\ 
&= -\frac{1}{m} \int_X u(x) \Big(-\frac{m}{m-1} u^{m-1} (x) \sum_{y \in X} k(x,y) \Big[mu^{m-1}(y)-mu^{m-1}(x) \\ & \qquad \qquad \qquad \qquad -(m-1)\big(\frac{u^m(y)}{u(x)}-u^{m-1}(x)\big)\Big] \Big) d \mu(x)\\
 &= -\frac{1}{m} \int_X u(x) \sum_{y \in X} k(x,y) \Big(m u^{m-2}(x)u^m(y) +\frac{m}{m-1} u^{2m-2} (x) \\
  & \qquad \qquad \qquad-\frac{m^2}{m-1} u^{m-1}(x)u^{m-1}(y)\Big) d\mu(x) \\ 
  &= -\frac{1}{m} \int_X u \tilde{\Psi}_\Upsilon^{(m)} (v) d\mu.
\end{align*}
\end{proof}
\begin{bemerkung}
Equation (\ref{DiscreteChainRuleV}) is the evolution equation for $v$ in the discrete case. Comparing (\ref{DiscreteChainRuleV}) and (\ref{ChainRuleV}), we see that the term $\tilde{\Psi}_{\Upsilon}^{(m)} (u)$ replaces $\vert \nabla v \vert^2$ occurring in the diffusion setting. 

The same appears in the dissipation term of the R\'{e}nyi entropy. Indeed, in the diffusion setting (e.g.\ with a bounded smooth domain $X\subset \iR^d$ and homogeneous Neumann boundary condition) we (formally) have
\begin{align*}
\frac{d}{dt} \int_X \frac{u^m}{m(m-1)} d\mu &= \frac{1}{m-1} \int_X u^{m-1} \Delta u^m d\mu = \frac{m}{m-1} \int_X u^{m-1} \nabla \cdot \big(u^{m-1} \nabla u \big) d\mu \\ &= -\frac{m}{m-1} \int_X \nabla \big(u^{m-1}\big) u^{m-1}\cdot \nabla u d\mu \\ &= -\frac{1}{m} \int_X u m^2 u^{2m-4} \vert \nabla u \vert^2 d\mu \\ &= -\frac{1}{m} \int_X u \vert \nabla v \vert^2 d\mu,
\end{align*}
by integration by parts. When comparing this with (\ref{DiscreteEntropyEqu}), we also observe that $\vert \nabla v \vert^2$ is replaced by $\tilde{\Psi}_{\Upsilon}^{(m)} (u)$ in the discrete setting.
\end{bemerkung}

In the following lemma we state some important properties of $\tilde{\Psi}_{\Upsilon}^{(m)}$. 

\begin{lemma}
\label{LemmaEigTildePsi}
Let $m>1$, $u: X\to (0,\infty)$, $v$ the corresponding pressure, and $x\in X$. Then the following statements hold true.
\begin{enumerate}
\item[(i)] $\tilde{\Psi}_{\Upsilon}^{(m)} (u) (x) \geq 0$. 
\item[(ii)] $\lim_{m \to 1} \tilde{\Psi}_{\Upsilon}^{(m)} (v)(x) = \Psi_\Upsilon (\log u)(x)$.
\item[(iii)] In the special case $m=2$,
\begin{align*}
\tilde{\Psi}_{\Upsilon}^{(2)} (v)(x) = \Gamma (v)(x),
\end{align*}
where $\Gamma (v)$ is the carr\'{e} du champ operator associated with $L$, see \eqref{Gammadef} below.
\end{enumerate}
\end{lemma}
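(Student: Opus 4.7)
The plan is to reduce all three statements to the compact representation
\[
\tilde{\Psi}_\Upsilon^{(m)}(u)(x) = u^2(x)\sum_{y\in X} k(x,y)\,\tilde{\Upsilon}\bigl(\log u(y) - \log u(x)\bigr)
\]
from Definition \ref{DefTildePsi}, together with elementary properties of the one-variable function $\tilde{\Upsilon}$.

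For (i), I would first establish pointwise non-negativity of $\tilde{\Upsilon}$ on $\mathbb{R}$. A direct computation gives $\tilde{\Upsilon}(0) = 0$ and
\[
\tilde{\Upsilon}'(r) = (m-1)\bigl(e^{\frac{m}{m-1}r} - e^r\bigr).
\]
Since $m > 1$ forces $\frac{m}{m-1} > 1$, the bracket has the same sign as $r$, so $r = 0$ is a global minimum and $\tilde{\Upsilon} \geq 0$. Multiplying by $u^2(x) \geq 0$ and the non-negative weights $k(x,y)$ then yields the claim.

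For (ii), the crucial observation is that $v = \tfrac{m}{m-1}u^{m-1}$ gives $\log v(y) - \log v(x) = (m-1)\bigl(\log u(y) - \log u(x)\bigr)$ and $v^2(x) = \tfrac{m^2}{(m-1)^2} u^{2(m-1)}(x)$. Abbreviating $s_y := \log u(y) - \log u(x)$, these identities collapse the summand to
\[
v^2(x)\,\tilde{\Upsilon}\bigl((m-1)s_y\bigr) = u^{2(m-1)}(x)\left[m\,\Upsilon(m s_y) - \frac{m^2}{m-1}\,\Upsilon\bigl((m-1)s_y\bigr)\right].
\]
This is a $0\cdot\infty$ form, which I resolve using the Taylor expansion $\Upsilon(r) = \tfrac{r^2}{2} + O(r^3)$ near $r = 0$: the second bracket term is of order $(m-1)\,s_y^2$ and vanishes, while the first converges to $\Upsilon(s_y)$ and the prefactor $u^{2(m-1)}(x) \to 1$. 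Since $k(x,\cdot)$ is supported on finitely many $y$, the limit passes through the sum to give $\Psi_\Upsilon(\log u)(x)$. This $0\cdot\infty$ resolution is the only genuine subtlety in the lemma.

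For (iii), I would specialise the definition of $\tilde{\Upsilon}$ at $m=2$, obtaining $\tilde{\Upsilon}(r) = \tfrac{1}{2}(e^{2r}-1-2r) - (e^r-1-r) = \tfrac{1}{2}(e^r - 1)^2$. Substituting into the representation above and using $v(x)\bigl(e^{\log v(y)-\log v(x)} - 1\bigr) = v(y) - v(x)$, the factor $v^2(x)$ cancels against the denominators and leaves $\tfrac{1}{2}\sum_y k(x,y)(v(y)-v(x))^2$, which is exactly the standard expression for the carré du champ $\Gamma(v)(x)$ associated with $L$.
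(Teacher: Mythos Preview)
Your proof is correct and follows the same overall strategy as the paper---reducing everything to pointwise properties of the scalar function $\tilde{\Upsilon}$---but the execution differs in parts (i) and (ii). For (i), the paper appeals to the convexity of $\Upsilon$ via the Jensen-type inequality $\tfrac{m-1}{m}\Upsilon(\tfrac{m}{m-1}r)+\tfrac{1}{m}\Upsilon(0)\ge \Upsilon(r)$, whereas you compute $\tilde{\Upsilon}'$ directly and read off the sign; both are one-line arguments and neither is clearly superior. For (ii), the paper first rewrites $\tilde{\Psi}^{(m)}_\Upsilon(v)$ explicitly as a polynomial in powers of $u$ and then applies l'H\^opital in $m$ to the resulting $0/0$ quotient, while you keep the $\Upsilon$-representation and use the second-order Taylor expansion $\Upsilon(r)=\tfrac{r^2}{2}+O(r^3)$ to kill the singular term $\tfrac{m^2}{m-1}\Upsilon((m-1)s_y)$; your route is arguably cleaner because it avoids unpacking the exponentials and makes transparent exactly why the divergent prefactor is harmless. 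Part (iii) is essentially identical in both, though you make the intermediate identity $\tilde{\Upsilon}(r)=\tfrac{1}{2}(e^r-1)^2$ for $m=2$ explicit, which is a nice touch.
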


\begin{proof}
(i) By convexity of $\Upsilon $ we have
\begin{align*}
\tilde{\Upsilon} (x) &= (m-1) \Big( \frac{m-1}{m} \Upsilon \big(\frac{m}{m-1} x \big)- \Upsilon (x) \Big) \\ &= (m-1) \Big( \frac{m-1}{m} \Upsilon \big(\frac{m}{m-1} x \big) + \frac{1}{m} \Upsilon (0) - \frac{1}{m} \Upsilon (0)- \Upsilon (x) \Big) \\ &\geq (m-1) \Big( \Upsilon (x) - \frac{1}{m} \Upsilon (0)- \Upsilon (x) \Big) =0
\end{align*}
and thus $\tilde{\Psi}_{\Upsilon}^{(m)} (u)(x) = u^2(x)\Psi_{\tilde{\Upsilon}} (\log u)(x) \geq 0$. 

(ii) Using l'H\^{o}spital's rule we find
\begin{align*}
\lim_{m \to 1} &\tilde{\Psi}_{\Upsilon}^{(m)} (v)(x) \\&= \sum_{y \in X} k(x,y) \frac{u(y)}{u(x)} - \lim_{m \to 1} \sum_{y \in X} k(x,y) \Big(\frac{m^2 u^{m-1} (y)u^{m-1} (x) - m u^{2m-2}(x)}{m-1} \Big) \\ &= \sum_{y \in X} k(x,y) \frac{u(y)}{u(x)} - \sum_{y \in X} k(x,y) \Big(1+\log \big(u(y)u(x)\big)-2 \log \big(u(x)\big) \Big) \\ &= \Psi_\Upsilon (\log u)(x).
\end{align*}

(iii) It follows directly from the definition of $\tilde{\Psi}_{\Upsilon}^{(m)}$ that
\begin{align} \label{Gammadef}
\tilde{\Psi}_{\Upsilon}^{(2)} (v)(x) = \frac{1}{2} \sum_{y \in X} k(x,y) \big(v(y)-v(x)\big)^2,
\end{align}
which coincides with the definition of the carr\'{e} du champ operator $\Gamma (v)$, see e.g.\ \cite[Section 1]{DKZ}
and \cite{BGL}.
\end{proof}

\begin{bemerkung}
(i) As $\partial_t v \to \partial_t \log u \ (m \to 1)$ and $(m-1) v Lv \to L \log u \ (m \to 1)$, we see, together with Lemma \ref{LemmaEigTildePsi} (ii), that (\ref{DiscreteChainRuleV}) yields the evolution equation
\begin{align*}
\partial_t \log u = L \log u + \Psi_\Upsilon (\log u)
\end{align*}
as $m \to 1$. As already mentioned, this equation for $\log u$ is the key starting point in \cite{DKZ} and \cite{KWZ} 
for deriving Li-Yau and Harnack estimates for positive solutions to the (discrete) heat equation.

(ii) The fact that $\tilde{\Psi}_{\Upsilon}^{(2)} (v) = \Gamma (v) $ is remarkable as in the diffusion setting
with $L=\Delta$ there holds $\Gamma (v) = \vert \nabla v \vert^2$, see \cite[Chapter 1.4.2]{BGL}. In this sense,
the continuous and the discrete evolution equation for $v$ coincide in the special case $m=2$.
\end{bemerkung}

\section{$CD$-condition and positive examples}
The observations from the previous chapter will be important later in the derivation of discrete Aronson-B\'{e}nilan estimates and related differential Harnack inequalities. In this section we introduce a curvature-dimension condition, denoted by 
$CD_m (0,d)$, which leads to an Aronson-B\'{e}nilan estimate on finite graphs. Our approach is inspired 
by the article \cite{DKZ}. As we will see with several examples, the calculations to verify $CD_m (0,d)$
are much more involved than in the linear case $m=1$. That our $CD$-condition is indeed useful will be shown by
positive examples such as, e.g., complete graphs. Interestingly, chain-like graphs with at least 5 vertices (in particular the
unweighted lattice $\iZ$) fail to satisfy the condition $CD_m (0,d)$ for all $m\ge 2$ and $d>0$, see Chapter \ref{SectionChains}. Moreover, our calculations indicate that there is a significant difference between the cases $m\ge 2$
 and $m\in (1,2)$.

In what follows, $X$ is a countable set and $k: X \times X \to [0,\infty)$ is nontrivial and such that for any $x\in X$ the set
of all $y\in X$ with $k(x,y)>0$ is finite, i.e.\ the induced graph is locally finite.
\subsection{Curvature-dimension condition}
For $m>1$ and $u: X\to (0,\infty)$ we set 
\begin{equation}
\mathcal{D}_{m} (u)(x) = m \sum_{y \in X} k(x,y) \big(u^{m-2} (y) Lu^m(y)-u^{m-2} (x) Lu^m(x)\big),\quad x\in X.
\label{MathcalD}
\end{equation}
This operator plays a key role in the $CD_{m} (0,d)$-condition, which is defined as follows.
\begin{definition}
\label{DefCD}
Let $m>1$ and $d>0$. We say that the operator $L$ from (\ref{eq:operator}) satisfies the condition $CD_{m} (0,d)$ at $x \in X$, if for every function $u: X \to (0,\infty)$ such that the function $v = \frac{m}{m-1} u^{m-1}$ satisfies
\begin{align}
-L v (x) >0 \text{ and } -L v (x) \geq -L v (y)\quad \text{ for all } y\in X \;\text{ with } k(x,y)>0
\label{MaxProp}
\end{align}
there holds
\begin{equation}
\mathcal{D}_m (u)(x) \geq \frac{1}{d} \big(-L v (x)\big)^2.
\label{CD-Bedingung}
\end{equation}
We say that $L$ satisfies $CD_m (0,d)$, if it satisfies $CD_m(0,d)$ at any $x \in X$.
\end{definition}
\begin{bemerkung}
\label{BemerkungCDBed}
As already mentioned, the condition $CD_m(0,d)$ is inspired by the approach from \cite{DKZ}. It is motivated by the
fact that for positive solutions $u$ of the discrete PME we have $\partial_t Lv=\mathcal{D}_m (u)$, see the proof of
Theorem \ref{ThmAronsonBen} in the special case $\alpha=0$. In \cite{DKZ}, the authors introduced 
the condition $CD(F;0)$ (see \cite[Definition 3.8]{DKZ}), where the quadratic term has been replaced by a term $F(-Lv)$ involving a so-called $CD$-function $F$ (cf.\ \cite[Definition 3.1]{DKZ}). Taking the limit $m\to 1$ in (\ref{CD-Bedingung}), we recover the condition $CD(F;0)$ with the $CD$-function $F (x) = \frac{1}{d} x^2$. Indeed,
\begin{align*}
\mathcal{D}_1 (u)(x) = L \Psi_{\Upsilon^\prime} (\log u)(x).
\end{align*}
Thus, $CD_m(0,d)$ generalises $CD(F;0)$ for quadratic $CD$-functions $F$ to the nonlinear case $m>1$.
Recall that, as already mentioned in the introduction, in case of the discrete PME, square functions are the natural choice for
the $CD$-function.
\end{bemerkung}
\subsection{Complete graphs}
We now want to illustrate the condition $CD_m(0,d)$ with the example of an unweighted complete graph, that is, for all
$x,y$ in the finite set of vertices $X$ we have $k(x,y)=1$ whenever $x\neq y$. 

We begin with the simplest case with only two vertices.
\begin{beispiel}
\label{Bsp2-Punkte}
Let $X = \{x_1,x_2\}$ and $u: X\to (0,\infty)$. Then 
\begin{align*}
L u(x) = u(\tilde{x})-u(x), \ x \in X,
\end{align*}
where $\tilde{x}_1 = x_2$ and vice versa. Now suppose that $u$ satisfies (\ref{MaxProp}) at $x_1$. This is equivalent
to $v(x_1)>v(x_2)$, which in turn is equivalent to $u(x_1)>u(x_2)$. We have 
\begin{align*}
\mathcal{D}_m (u)(x_1) &= m \big(u^{m-2} (x_2) Lu^m(x_2)-u^{m-2} (x_1) Lu^m(x_1)\big) \\ &= m \big( u^{m-2}(x_2) u^m(x_1) -u^{2m-2}(x_2)-u^{m-2}(x_1)u^m(x_2)+u^{2m-2}(x_1)\big).
\end{align*}
Further,
\begin{align*}
\big(-L v(x_1)\big)^2 &= v^2(x_2) -2 v(x_2) v(x_1) + v^2 (x_1) \\&= \frac{m^2}{(m-1)^2} \big( u^{2m-2} (x_2) -2 u^{m-1} (x_2) u^{m-1} (x_1) + u^{2m-2} (x_1)\big),
\end{align*}
and thus setting $z = \frac{u(x_2)}{u(x_1)}\in (0,1)$ we find that $CD_m(0,\nu)$ is satisfied if and only if $f_{\nu,m} (z) \geq 0$ for all $z\in (0,1)$, where
\begin{align*}
f_{\nu,m} (z) = \nu m (z^{m-2} -z^{2m-2}-z^m+1) - \frac{m^2}{(m-1)^2} \big( z^{2m-2} -2 z^{m-1} + 1\big).
\end{align*}

We first consider the case $m>2$. Sending $z\to 0$ shows that $\nu \geq \frac{m}{(m-1)^2}$ must hold. We now show that the choice $\nu_* = \frac{m}{(m-1)^2}$ is sufficient, i.e. $f_{\nu_*,m} (z) \geq 0, z \in (0,1),$ and thus optimal for $m>2$. We have
\begin{align*}
f_{\nu_*,m} (z) = \frac{m^2}{(m-1)^2} (z^{m-2}-2z^{2m-2}-z^m+2z^{m-1})
\end{align*}
which is greater or equal to $0$ if and only if $\frac{1}{z} -2z^{m-1}-z+2 \geq 0$. The latter obviously holds for all $z\in (0,1)$. Note that this estimate is also true for any $m \in (1,2]$ but can be sharpened for these choices of $m$. Indeed, consider for example the case $m=2$. Then $f_{\nu,2} (0) = 4 \nu - 4$ and thus $\nu \geq 1$ must hold. By a straightforward calculation, one can show that $f_{1,2} (z) \geq 0, z \in [0,1],$ and thus $\nu =1 (< \frac{m}{(m-1)^2}=2)$ is the optimal choice in this case. In the case $m \in (1,2)$, we have $f_{\nu,m} \geq 0$ for all
$z\in (0,1)$ if and only if
\begin{align*}
g_m(z) := \frac{m}{(m-1)^2} \frac{z^{2m-2} -2 z^{m-1} + 1}{z^{m-2}-z^{2m-2}-z^m+1} = \frac{m}{(m-1)^2} \frac{(1-z^{m-1})^2}{(1+z^{m-2})(1-z^m)} \leq \nu, \ z \in (0,1),
\end{align*}
by positivity of the term $z^{m-2}-z^{2m-2}-z^m+1=(1+z^{m-2})(1-z^m) , \ z \in [0,1]$. Thus, the optimal constant $\nu$ in this case is given by the maximum $\nu = \max_{x \in (0,1)} g_m(x) \ \big(<\frac{m}{(m-1)^2}\big)$, which is indeed assumed in $(0,1)$.
\end{beispiel}

\begin{beispiel}
Next, we consider the case with 3 vertices, i.e. $X = \{x_1,x_2,x_3\}$ and $k(x_i,x_j) =1, \ i \neq j$. Let $u$ be a positive function on $X$ satisfying (\ref{MaxProp}) at $x_1$. Observe that since
\begin{align*}
L u(x_i) = \sum_{j \neq i} u(x_j)-2u(x_i), \quad i=1,2,3,
\end{align*}
the property $-Lv(x_1)\ge -Lv(x_i)$, $i=2,3$, is equivalent to $u(x_1) \geq u(x_i), i = 2,3$. We have 
\begin{align*}
\mathcal{D}_m &(u)(x_1) = m \big(u^{m-2} (x_2) Lu^m(x_2)+ u^{m-2} (x_3) Lu^m(x_3)-2u^{m-2} (x_1) Lu^m(x_1)\big) \\ &= m \Big( u^{m-2}(x_2)\big(u^m(x_1)+u^m(x_3)-2u^m(x_2)\big)+u^{m-2}(x_3) \big(u^m(x_2)+u^m(x_1)-2u^m(x_3)\big)\\& \qquad \qquad -2u^{m-2}(x_1) \big(u^m(x_2)+u^m(x_3)-2u^m(x_1)\big)\Big).
\end{align*}
Further,
\begin{align*}
\big(-L v(x_1)\big)^2 &= v^2(x_2)+v^2(x_3)+4v^2(x_1)+2v(x_2)v(x_3)-4v(x_1)v(x_2)-4v(x_1)v(x_3) \\ &= \frac{m^2}{(m-1)^2} \Big(u^{2m-2} (x_2)+u^{2m-2} (x_3)+4u^{2m-2} (x_1)+2u^{m-1} (x_2)u^{m-1} (x_3)\\ &\qquad \qquad -4u^{m-1} (x_1) u^{m-1} (x_2)-4u^{m-1} (x_1) u^{m-1} (x_3)\Big),
\end{align*}
and thus setting $z_i = \frac{u(x_{i+1})}{u(x_1)}\in (0,1], \ i=1,2$, we find that $CD_m(0,\nu)$ is satisfied if and only if $f_{\nu,m} (z_1,z_2) \geq 0$, where
\begin{align*}
f_{\nu,m} (z_1,z_2) &= \nu m \big(z_1^{m-2}z_2^m+z_2^{m-2}z_1^m+z_1^{m-2}+z_2^{m-2}-2z_1^{2m-2}-2z_2^{2m-2}-2z_1^m-2z_2^m+4\big)\\ &\qquad - \frac{m^2}{(m-1)^2} \big((z_1^{m-1}+z_2^{m-1})^2-4z_1^{m-1}-4z_2^{m-1}+4\big),\quad z_1,z_2\in (0,1].
\end{align*}

Again, we first consider the case $m>2$. Sending $z_1,z_2\to 0$ shows that $\nu \geq \frac{m}{(m-1)^2}$ must hold. As in Example \ref{Bsp2-Punkte}, the choice $\nu_* = \frac{m}{(m-1)^2}$ is also sufficient, i.e. $f_{\nu_*,m} (z_1,z_2) \geq 0, \ z_1,z_2 \in (0,1],$ and thus optimal for $m>2$. To see this, we calculate
\begin{align*}
f_{\nu_*,m} (z_1,z_2) &= \frac{m^2}{(m-1)^2} \big(z_1^{m-2}z_2^m+z_2^{m-2}z_1^m+z_1^{m-2}+z_2^{m-2}-2z_1^{2m-2}-2z_2^{2m-2}-2z_1^m-2z_2^m\\ &\qquad \qquad \qquad \qquad -(z_1^{m-1}+z_2^{m-1})^2+4z_1^{m-1}+4z_2^{m-1}\big).
\end{align*}
Now,
\begin{align*}
&\frac{(m-1)^2 f_{\nu_*,m} (z_1,z_2)}{m^2 (z_1^{m-1}+z_2^{m-1})} = \frac{z_1^{m-2}z_2^m+z_2^{m-2}z_1^m}{z_1^{m-1}+z_2^{m-1}}+\frac{z_1^{m-2}+z_2^{m-2}}{z_1^{m-1}+z_2^{m-1}}-2\frac{z_1^{2m-2}+z_2^{2m-2}}{z_1^{m-1}+z_2^{m-1}}\\ &\qquad \qquad \qquad \qquad \qquad \qquad-2\frac{z_1^m+z_2^m}{z_1^{m-1}+z_2^{m-1}} -z_1^{m-1}-z_2^{m-1}+4 \\ &\geq \frac{z_1^{m-2}z_2^m+z_2^{m-2}z_1^m}{z_1^{m-1}+z_2^{m-1}}-z_1^{m-1}-z_2^{m-1}+1 \geq \frac{z_1^{m-2}z_2^m+z_2^{m-2}z_1^m -2z_1^{m-1}z_2^{m-1}}{z_1^{m-1}+z_2^{m-1}} \\ &= \frac{z_1^{m-2}z_2^{m-2} (z_1-z_2)^2}{z_1^{m-1}+z_2^{m-1}} \geq 0.
\end{align*}

As before, this estimate also holds for any $m \in (1,2]$ but can be sharpened for these choices of $m$. Taking again $m=2$, we have $f_{\nu,2} (0,0) = 12 \nu - 16$ and thus $\nu \geq \frac{4}{3}$ must hold. By a straightforward calculation, one can show that $f_{\frac{4}{3},2} (z_1,z_2) \geq 0, z_1,z_2 \in (0,1],$ and thus $\nu =\frac{4}{3} (< \frac{m}{(m-1)^2}=2)$ is the optimal choice in this case, see also the end of Example \ref{BspVollstGraph}.
\end{beispiel}

\begin{beispiel}
\label{BspVollstGraph}
The technique from the previous two examples can be generalized to the case of an unweighted complete graph with $D$ vertices ($D \in \N$, $D\ge 2$), i.e. $X = \{x_1,\dots,x_D\}$ and $k(x_i,x_j) =1,\ i \neq j$. Let $u$ be a positive function satisfying (\ref{MaxProp}) at $x_1$, which in this case means that $u(x_1) \geq u(x_i), \ i = 2,\dots,D$, as
\begin{align*}
L v(x_i) = \sum_{j \neq i} v(x_j)-(D-1) v(x_i), \ i=1,\dots,D,
\end{align*}
and by monotonicity of $\{y\to y^m\}$ on $(0,\infty)$. We have
\begin{align*}
\mathcal{D}_m (u)(x_1) &= m \sum_{j \neq 1} \big(u^{m-2} (x_j) Lu^m(x_j)\big)-m(D-1)u^{m-2} (x_1) Lu^m(x_1) \\ &= m \sum_{j \neq 1} u^{m-2}(x_j) \sum_{l \neq j} u^m(x_l)-m (D-1) \sum_{j\neq 1} u^{2m-2} (x_j) \\ &\qquad  \qquad- m(D-1) u^{m-2}(x_1) \sum_{j\neq 1} u^m(x_j) +m(D-1)^2 u^{2m-2} (x_1). 
\end{align*}
Further,
\begin{align*}
\big(-L v(x_1)\big)^2 &= \Big(\sum_{j \neq 1} \big(v(x_j)\big)-(D-1)v(x_1)\Big)^2\\ &= \frac{m^2}{(m-1)^2} \Big(\big(\sum_{j\neq 1} u^{m-1}(x_j)\big)^2 -2(D-1) u^{m-1}(x_1) \sum_{j\neq 1} \big(u^{m-1}(x_j)\big) \\ & \qquad \qquad \qquad+ (D-1)^2 u^{2m-2}(x_1)\Big).
\end{align*}
Consequently, setting $z_i = \frac{u(x_{i+1})}{u(x_1)}\in (0,1], i=1,\dots,D-1$, and $z=(z_1,\ldots,z_{D-1})$ we find that $CD_m(0,\nu)$ is satisfied if and only if $f_{\nu,m} (z) \geq 0$, where
\begin{align*}
f_{\nu,m} &(z_1,\dots,z_{D-1}) \\&= \nu m \Big(\sum_{j} z_j^{m-2} \sum_{l\neq j} z_l^m + \sum_j z_j^{m-2} -(D-1) \sum_j z_j^{2m-2} -(D-1) \sum_j z_j^m +(D-1)^2\Big)\\ &\qquad  - \frac{m^2}{(m-1)^2} \Big(\big(\sum_j z_j^{m-1}\big)^2 -2(D-1) \sum_j z_j^{m-1} +(D-1)^2\Big),\quad z_1,\ldots,z_{D-1}\in (0,1].
\end{align*}

Again, we first consider the case $m>2$. Sending all $z_i\to 0$ shows that $\nu \geq \frac{m}{(m-1)^2}$ is necessary. As before, the choice $\nu_* = \frac{m}{(m-1)^2}$ is also sufficient in this case, i.e. $f_{\nu_*,m} (z_1,\dots,z_{D-1}) \geq 0, z_1,\dots,z_{D-1} \in (0,1],$ and thus optimal for $m>2$. In fact, we calculate
\begin{align*}
f_{\nu_*,m} (z_1,\dots,z_{D-1}) &= \frac{m^2}{(m-1)^2} \Big(\sum_{j} z_j^{m-2} \sum_{l\neq j} z_l^m + \sum_j z_j^{m-2} -(D-1) \sum_j z_j^{2m-2} \\ & \qquad  -(D-1) \sum_j z_j^m-\big(\sum_j z_j^{m-1}\big)^2 +2(D-1) \sum_j z_j^{m-1}\Big).
\end{align*}
Now,
\begin{align*}
\frac{(m-1)^2  f_{\nu_*,m} (z)}{m^2 \sum_j z_j^{m-1}} &= \frac{\sum_{j} z_j^{m-2} \sum_{l\neq j} z_l^m}{\sum_j z_j^{m-1}}+\frac{\sum_j z_j^{m-2}}{\sum_j z_j^{m-1}}-(D-1)\frac{\sum_j z_j^{2m-2}}{\sum_j z_j^{m-1}}\\ &\qquad  \qquad -(D-1)\frac{\sum_j z_j^m}{\sum_j z_j^{m-1}} -\sum_j z_j^{m-1}+2(D-1) \\ &\geq \frac{\sum_{j} z_j^{m-2} \sum_{l\neq j} z_l^m}{\sum_j z_j^{m-1}}-\sum_j z_j^{m-1}+1 \geq \frac{\sum_{j} z_j^{m-2} \sum_{l} z_l^m - \big(\sum_j z_j^{m-1} \big)^2}{\sum_j z_j^{m-1}} \geq 0,
\end{align*}
since 
\begin{align*}
\sum_{j} z_j^{m-2} \sum_{l} z_l^m - \big(\sum_j z_j^{m-1} \big)^2 &= \sum_j \sum_l (z_j^{m-2} z_l^m -z_j^{m-1} z_l^{m-1}) \\ &= \frac{1}{2} \sum_j \sum_l (z_j-z_l)^2 z_j^{m-2}z_l^{m-2} \geq 0.
\end{align*}

As before, this estimate also holds for any $m \in (1,2]$ but can be improved for these choices of $m$. In the case $m=2$ we have for example $f_{\nu,2} (0,\dots,0) = 2\nu \big((D-1)+(D-1)^2\big) -4 (D-1)^2$ and thus $\nu \geq \frac{2(D-1)}{D}$ must hold. By a straightforward calculation, one can show that 
\begin{align*}
f_{\frac{2(D-1)}{D},2} (z_1,\dots,z_{D-1}) = D \sum_j z_j - \frac{D}{2} \sum_j z_j^2 - \frac{D}{2(D-1)} \big(\sum_j z_j \big)^2, \ z_1,\dots,z_{D-1} \in (0,1].
\end{align*}
Now,
\begin{align*}
\frac{f_{\frac{2(D-1)}{D},2} (z_1,\dots,z_{D-1})}{\sum_j z_j} = D-\frac{D}{2} \frac{\sum_j z_j^2}{\sum_j z_j} - \frac{D}{2(D-1)} \sum_j z_j \geq D - \frac{D}{2} -\frac{D}{2}=0
\end{align*}
and thus $\nu =\frac{2(D-1)}{D} (< \frac{m}{(m-1)^2}=2)$ is the optimal choice in this case. Setting $D=2$ respectively $D=3$ yields the results from the previous two examples.
\end{beispiel}

\subsection{The square for $m=2$}
\label{SectionSquare}
In this subsection we consider as another example the unweighted square in the case $m=2$, i.e. $X= \{x,y_1,y_2,z\}$ and the (symmetric) kernel $k$ is defined by $k(x,y_i) = k(z,y_i)=1, \ i=1,2$ and $k(x,z)=k(y_1,y_2)=0$.
\begin{center}
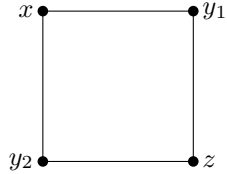
\begin{figure}[h]
 \begin{tikzpicture}
\coordinate[label=left:$y_2$] (y_2) at (0,0);
\coordinate[label=right:$z$] (z) at (2,0);
\coordinate[label=right:$y_1$] (y_1) at (2,2);
\coordinate[label=left:$x$] (x) at (0,2);
  \draw (x) -- (y_1);
  \draw (x) -- (y_2);
  \draw (y_2) -- (z);
  \draw (y_1) -- (z);
\fill (x) circle (2pt);
\fill (y_1) circle (2pt);
\fill (z) circle (2pt);
\fill (y_2) circle (2pt);
\end{tikzpicture}
\caption{The square.}
\end{figure}
\end{center}

Let $u$ be a positive function on $X$ and $v=2u$ the associated pressure function. Suppose that $v$ satisfies the maximum property (\ref{MaxProp}) at $x$. This is equivalent to the following conditions:
\begin{align*}
(\text{\Romannum{1}}): \ &-Lv(x) > 0 \Leftrightarrow 2 v(x) > v(y_1)+v(y_2) \\
(\text{\Romannum{2}}): \ &-Lv(x) \geq -Lv(y_1) \Leftrightarrow v(z) \geq 3 v(y_1)-3v(x)+v(y_2) \\
(\text{\Romannum{3}}): \ &-Lv(x) \geq -Lv(y_2) \Leftrightarrow v(z) \geq 3 v(y_2)-3v(x)+v(y_1).
\end{align*}
Define $a := \frac{v(y_1)}{v(x)} (= \frac{u(y_1)}{u(x)})$ and $b := \frac{v(y_2)}{v(x)} (= \frac{u(y_2)}{u(x)})$. Then $(\text{\Romannum{1}}) - (\text{\Romannum{3}})$ is equivalent to
\begin{align*}
(\text{\Romannum{1}})^\prime: \ &2 > a+b \\
(\text{\Romannum{2}})^\prime: \ &\frac{u(z)}{u(x)}  \geq 3 a-3+b \\
(\text{\Romannum{3}})^\prime: \ &\frac{u(z)}{u(x)} \geq 3 b-3+a.
\end{align*}
We want to determine the (minimal) $d>0$ such that $\mathcal{D}_2 (u)(x) \geq \frac{1}{d} \big(-Lv(x)\big)^2$. This is equivalent to 
\begin{align*}
\frac{\mathcal{D}_2 (u)(x)}{u^2(x)} \geq \frac{1}{d} \frac{\big(-Lv(x)\big)^2}{u^2(x)}.
\end{align*}
We have 
\begin{align*}
\frac{\mathcal{D}_2 (u)(x)}{u^2(x)} = \frac{1}{u^2(x)} \Big(2 Lu^2 (y_1)+2Lu^2(y_2)-4 Lu^2(x)\Big) = 12-8a^2-8b^2+4 \Big( \frac{u(z)}{u(x)}\Big)^2
\end{align*}
and
\begin{align*}
\frac{\big(-Lv(x)\big)^2}{u^2(x)} = \frac{1}{u^2(x)} \Big(4u(x)-2u(y_1)-2u(y_2)\Big)^2 = 4(2-a-b)^2.
\end{align*}
In order to apply $(\text{\Romannum{1}})^\prime-(\text{\Romannum{3}})^\prime$, we have to distinguish 4 cases. \\

\emph{Case 1:} $3a-3+b \geq 0$ and $3b-3+a \geq 0$. Then by (\Romannum{2})$^\prime$ and (\Romannum{3})$^\prime$ we find
\begin{align*}
\frac{\mathcal{D}_2 (u)(x)}{u^2(x)} &\geq 12-8a^2-8b^2+2(3a-3+b)^2+2(3b-3+a)^2 \\ &=48 -48a-48b+12a^2+12b^2+24ab = 12(2-a-b)^2 = 3 \frac{\big(-Lv(x)\big)^2}{u^2(x)}.
\end{align*}

\emph{Case 2:} $3a-3+b < 0$ and $3b-3+a < 0$. Then $a<1$ and $b<1$ and thus
\begin{align*}
\frac{\mathcal{D}_2 (u)(x)}{u^2(x)} &\geq 12-8a^2-8b^2 \\ &=12a+12b-11a^2-11b^2-6ab+3(4-4a-4b+2ab+a^2+b^2) \\ &= 12a+12b-11a^2-11b^2-6ab+3(2-a-b)^2 \\ &\geq 12a-11a^2-3a(3-3a)+12b-11b^2-3b(3-3b)+3(2-a-b)^2\\&= a(3-2a)+b(3-2b)+3(2-a-b)^2 \\& \geq 3 (2-a-b)^2 = \frac{3}{4} \frac{\big(-Lv(x)\big)^2}{u^2(x)}.
\end{align*}
Note, that we have equality in the limit case when $a$ and $b$ go to zero.

\emph{Case 3:} $3a-3+b \geq 0$ and $3b-3+a < 0$. Then we have $a \geq b$. We show that $d=\frac{4}{3}$ is also sufficient in this case.
\begin{align*}
\frac{\mathcal{D}_2 (u)(x)}{u^2(x)} - \frac{3}{4} \frac{\big(-Lv(x)\big)^2}{u^2(x)} &\geq 12-8a^2-8b^2+4(3a-3+b)^2-{3} (2-a-b)^2 \\ &= 36-60a-12b+25a^2-7b^2+18ab \\ &\geq 36-60a-12b+25a^2+b^2+10ab \\ &= (5a+b-6)^2 \geq 0.
\end{align*}

\emph{Case 4:} $3a-3+b \geq 0$ and $3b-3+a < 0$. Analogeously to Case 3.\\

Summarizing all 4 cases, we find that
\begin{align*}
\mathcal{D}_2 (u)(x) \geq \frac{3}{4} \big(-Lv(x)\big)^2
\end{align*}
with equality in the limit case $u(y_1)=u(y_2)=u(z)=0$. Thus, $CD_2 (0,d)$ is satisfied at $x$ with the optimal constant $d=\frac{4}{3}$, and by symmetry, we conclude that $L$ satisfies $CD_2 (0,\frac{4}{3})$.

\begin{bemerkung}
As we will see in Theorem \ref{ThmAronsonBen}, this estimate will be sufficient for proving an Aronson-B\'{e}nilan estimate for solutions to the PME on the square in the case $m=2$. In Corollary \ref{KorollarSquare}, we will further show the validity of an alternative condition that gives a little weaker differential estimate as the one of Aronson and B\'{e}nilan but instead is valid for any $m >1$.
\end{bemerkung}
 
\section{Chain-like graphs, the lattice $\Z$, and a generalised $CD$-condition}
\label{SectionChains}
\label{SubsecZ}
We will now study chain-like graphs in the unweighted case. We already saw in Example \ref{Bsp2-Punkte} that for any $m>1$ there exists a $d>0$ such that the 2-point graph satisfies $CD_m (0,d)$. In this section, we will see that the condition $CD_m (0,d)$ fails on chain structures at least in the case $m =2$ whenever the chain has more than 2 vertices. For chains with more than 4 vertices we even prove the failure for any $m \geq 2$ in Example \ref{Bspp>1}. As the same problem also occurs on the lattice $\Z$, we can conclude that there exists no $d>0$ such that the discrete Laplacian on the lattice $\Z$ satisfies $CD_m(0,d)$ for $m \geq 2$. 

However, motivated by the failure of $CD_m (0,d)$, we introduce a more general curvature-dimension condition, denoted by $CD_{m,\alpha} (0,d)$ and with an additional parameter $\alpha\in [0,1]$, and will show that the discrete Laplacian on $\Z$ satisfies the condition $CD_{m,1} (0,\frac{1}{m-1})$. 

\subsection{$CD_m(0,d)$ on chain structures}
We first consider the chain with 3 vertices for $m=2$. 

\begin{beispiel}
\label{BspChain3}
Let $m=2$, $X=\{x,y,z\}$, $k(x,z) = k(x,y)=1$ and $k(y,z)=0$. Let $L$ be the operator generated by $k$. 

\begin{center}
\begin{figure}[h]
 \begin{tikzpicture}
\coordinate[label=above:$x$] (x) at (0,0);
\coordinate[label=above:$z$] (z) at (2,0);
\coordinate[label=above:$y$] (y) at (-2,0);
  \draw (x) -- (y);
  \draw (x) -- (z);
\fill (x) circle (2pt);
\fill (y) circle (2pt);
\fill (z) circle (2pt);
\end{tikzpicture}
\caption{Chain of 3 vertices.}
\end{figure}
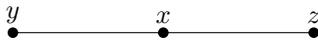
\end{center}

Observe that by continuity and since $m=2$, validity of the $CD_2(0,d)$ condition (which is a property for positive
functions) extends to nonnegative functions $u:X\to [0,\infty)$ in (\ref{MaxProp}). Now, let $u(x) = 1$, $u(y) =1.5$ and $u(z)=0$. Then $-L v(x) = -2Lu(x) = 1>0$, $-Lv(y) = 1 \leq -Lv(x)$ and $-Lv(z) = -2 \leq -Lv(x)$ and thus (\ref{MaxProp}) is satisfied at $x$. However, for $\mathcal{D}_2 (u) (x)$ we obtain
\begin{align*}
\mathcal{D}_2 (u)(x) = 2Lu^2(y)+2Lu^2(z)-4Lu^2(x) = 12 u^2(x)-6u^2(y)-6u^2(z) = -\frac{3}{2} <0,
\end{align*}
and thus there is no $d>0$ such that $\mathcal{D}_2 (u)(x) \geq \frac{1}{d} \big(-Lv(x)\big)^2$ holds, i.e.\ $CD_2(0,d)$ cannot be satisfied for any $d>0$.
\end{beispiel}

The same problem arises when considering the chain with 4 vertices.

\begin{beispiel}
\label{BspChain4}
Let $m=2$, $X=\{w,x,y,z\}$, $k(w,x) = k(x,y)=k(y,z)=1$ and $k=0$ for all other edges. Let $L$ be the operator generated by $k$. 
\begin{center}
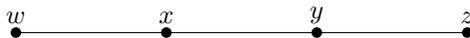
\begin{figure}[h]
 \begin{tikzpicture}
\coordinate[label=above:$x$] (x) at (-1,0);
\coordinate[label=above:$z$] (z) at (3,0);
\coordinate[label=above:$y$] (y) at (1,0);
\coordinate[label=above:$w$] (w) at (-3,0);
  \draw (x) -- (y);
  \draw (y) -- (z);
    \draw (w) -- (x);
\fill (w) circle (2pt);
\fill (x) circle (2pt);
\fill (y) circle (2pt);
\fill (z) circle (2pt);
\end{tikzpicture}
\caption{Chain of 4 vertices.}
\end{figure}
\end{center}
Let $u(x) = 1$, $u(w) =1.5$ and $u(y)=u(z)=0$. Then $-L v(x) = -2Lu(x) = 1>0$, $-Lv(w) = 1 \leq -Lv(x)$ and $-Lv(y) = -2 \leq -Lv(x)$ and thus (\ref{MaxProp}) is satisfied at $x$. However, for $\mathcal{D}_2 (u) (x)$ we find
\begin{align*}
\mathcal{D}_2 (u)(x) = 2Lu^2(w)+2Lu^2(y)-4Lu^2(x) = 12 u^2(x)-6u^2(w)-8u^2(y) +2u^2(z) = -\frac{3}{2} <0,
\end{align*}
and therefore $CD_2(0,d)$ cannot be satisfied for any $d>0$.
\end{beispiel}

We finally consider the chain of 5 vertices and show similar results as before. The point here is that $CD_m (0,d)$ also fails to be true at the point in the middle of the chain. We begin with the case $m=2$ and generalize our negative result afterwards to the case $m\geq 2$. As a consequence, we see that for every chain of more than 5 vertices $CD_m (0,d)$ is violated for $m\geq 2$. 
\begin{beispiel}
\label{Bspp=2}
Let $m=2$, $X=\{v,w,x,y,z\}$, $k(v,w)=k(w,x) = k(x,y)=k(y,z)=1$ and $k=0$ for all remaining edges. Let $L$ be the operator associated with $k$. 

\begin{center}
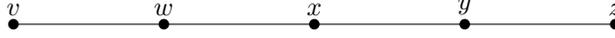
\begin{figure}[h]
 \begin{tikzpicture}
\coordinate[label=above:$x$] (x) at (0,0);
\coordinate[label=above:$z$] (z) at (4,0);
\coordinate[label=above:$y$] (y) at (2,0);
\coordinate[label=above:$w$] (w) at (-2,0);
\coordinate[label=above:$v$] (v) at (-4,0);
  \draw (x) -- (y);
  \draw (y) -- (z);
    \draw (w) -- (x);
    \draw (v) -- (w);
\fill (v) circle (2pt);
\fill (w) circle (2pt);
\fill (x) circle (2pt);
\fill (y) circle (2pt);
\fill (z) circle (2pt);
\end{tikzpicture}
\caption{Chain of 5 vertices.}
\end{figure}
\end{center}

For $\varepsilon \in \big(0,\frac{3}{5}\big]$ we define the function $u_\varepsilon: X \to [0,\infty)$ via the function $v_\varepsilon$ that is given by
\begin{align*}
v_\varepsilon (l) = \begin{cases}
1 &, l =x,\\
2-2\varepsilon &, l=y,\\
\varepsilon&, l =w,\\
3-5\varepsilon &, l=z,\\
0 &, l=v.
\end{cases}
\end{align*} 
We set $u_\varepsilon = \frac{1}{2} v_\varepsilon$. We then have $-Lv_\varepsilon (x) = -L v_\varepsilon (y) = \varepsilon$ and $-L v_\varepsilon (w) = 2\varepsilon-1 (< \varepsilon)$. Thus, (\ref{MaxProp}) is satisfied at $x$ for any $\varepsilon \in \big(0,\frac{3}{5}\big]$. For $\mathcal{D}_2 (u_\varepsilon) (x)$ we get
\begin{align*}
\mathcal{D}_2 (u_\varepsilon) (x) &= 1+(3-5 \varepsilon)^2 -2 (2-2\varepsilon)^2+1-2 \varepsilon^2 - 2 \big((2-2\varepsilon)^2+\varepsilon^2-2\big) \to -1 \ (\varepsilon \to 0),
\end{align*}
which shows that for sufficiently small $\varepsilon>0$ there does not exist any $d>0$ such that $\mathcal{D}_2 (u_\varepsilon) (x) \geq \frac{1}{d} \big( -L v_\varepsilon (x)\big)^2$, i.e. $CD_2(0,d)$ is not satisfied for any $d>0$.
\end{beispiel}

As already mentioned, the counterexample for the case $m=2$ can even be generalized to the case $m \geq 2$ as the following example shows.

\begin{beispiel}
\label{Bspp>1}
Let $L, \varepsilon$ and $v_\varepsilon$ be as in Example \ref{Bspp=2}. We now set $u_\varepsilon = \Big(\frac{m-1}{m} v_\varepsilon \Big)^\frac{1}{m-1}$. By the same arguments as in Example \ref{Bspp=2}, (\ref{MaxProp}) is satisfied at $x$ for any $\varepsilon \in \big(0,\frac{3}{5} \big]$ and for $\mathcal{D}_m (u_\varepsilon) (x)$ we get
\begin{align*}
&\mathcal{D}_m (u_\varepsilon) (x) = m \Big( u_\varepsilon^{m-2} (y) Lu_\varepsilon^m (y) + u_\varepsilon^{m-2}(w) Lu_\varepsilon^m (w) -2u_\varepsilon^{m-2} (x) Lu_\varepsilon^m(x)\Big) \\&= \frac{(m-1)^2}{m} \Big( v_\varepsilon^\frac{m-2}{m-1} (y) Lv_\varepsilon^\frac{m}{m-1} (y) + v_\varepsilon^\frac{m-2}{m-1} (w) Lv_\varepsilon^\frac{m}{m-1} (w) -2v_\varepsilon^\frac{m-2}{m-1} (x) Lv_\varepsilon^\frac{m}{m-1} (x)\Big) \\ &= \frac{(m-1)^2}{m} \Big( (2-2\varepsilon)^\frac{m-2}{m-1} \big((3-5\varepsilon)^\frac{m}{m-1} +1-2(2-2\varepsilon)^\frac{m}{m-1} \big) + \varepsilon^\frac{m-2}{m-1} \big(1-2\varepsilon^\frac{m}{m-1}\big) \\ & \qquad \qquad \qquad \qquad-2 \big( (2-2\varepsilon)^\frac{m}{m-1} + \varepsilon^\frac{m}{m-1}-2\big)\Big).
\end{align*}
As $m>2$, we find
\begin{align*}
\mathcal{D}_m (u_\varepsilon)(x) \xrightarrow{\varepsilon \to 0} \ &\frac{(m-1)^2}{m} \Big( 2^\frac{m-2}{m-1} \big(3^\frac{m}{m-1} +1-2 \cdot 2^\frac{m}{m-1}\big)-2 \big(2^\frac{m}{m-1} -2\big)\Big) \\ = \ &\frac{(m-1)^2}{m} \Big(2^\frac{m-2}{m-1} 3^\frac{m}{m-1} + 2^\frac{m-2}{m-1} -8-2 \cdot 2^\frac{m}{m-1} + 4\Big) \\ = \ & \frac{(m-1)^2}{m} 2^{-\frac{1}{m-1}} \big(2 \cdot 3^\frac{m}{m-1} +2 -2^\frac{2m}{m-1} -2^\frac{2m-1}{m-1}\big) \\ = \ & \frac{(m-1)^2}{m} 2^{-\frac{1}{m-1}} \big(2 \cdot 3^\frac{m}{m-1} -4^\frac{m}{m-1} -2^\frac{m}{m-1} +2-2^\frac{m}{m-1}\big) \\ < \ &0.
\end{align*}
Here, the last step is justified as $2 \cdot 3^\frac{m}{m-1} -4^\frac{m}{m-1} -2^\frac{m}{m-1} \leq 0$, by convexity of the mapping $x \mapsto x^\frac{m}{m-1}$ and $2-2^\frac{m}{m-1} <0$ as $\frac{m}{m-1}>1$. As in the previous example, this shows that $CD_m (0,d)$ is not valid for any $d>0$ when $m\geq 2$.
\end{beispiel}

\begin{bemerkung}
The previous example is important for the discrete Laplacian on the (unweighted) lattice $\Z$. As only neighbours of first and second order have influence on the term $\mathcal{D}_m$, we can conclude with Example \ref{Bspp>1} that the discrete Laplacian on $\Z$ does not satisfy $CD_m (0,d)$ for any $d>0$ if $m\geq 2$. The same holds for any chain of more than 5 vertices. The case $m\in (1,2)$ remains an interesting open problem. 
\end{bemerkung}

\subsection{Generalized curvature-dimension condition}
As we saw, chain-like graphs with more than 2 vertices including the discrete Laplacian on $\Z$ do not satisfy $CD_m(0,d)$ with any $d>0$ when $m \geq 2$. Looking for alternatives, we introduce a more general version of the condition $CD_m (0,d)$ involving an additional parameter. It turns out that for a special choice of the additional parameter, the discrete Laplacian on $\iZ$ does satisfy the new $CD$-condition for any $m>1$. 

Let $m>1$, $X$ a countable set and $k: X \times X \to [0,\infty)$ be nontrivial and such that for any $x\in X$ the set
of all $y\in X$ with $k(x,y)>0$ is finite. We generalize the operator $\mathcal{D}_m$ from (\ref{MathcalD}) by defining for $u:X \to (0,\infty)$ and $x\in X$,
\begin{align*}
\mathcal{D}_{m,\alpha} (u)(x) &:= \sum_{y \in X} k(x,y) \Big(\Big(1-\alpha+\alpha \frac{u(y)}{u(x)}\Big) m u^{m-2} (y) Lu^m(y) \\ &\qquad \qquad \qquad \qquad -\Big(m-\alpha+\alpha \Big(\frac{u(y)}{u(x)}\Big)^m\Big) u^{m-2} (x) Lu^m(x)\Big), \notag
\end{align*}
where $\alpha \in [0,1]$. Note that $\mathcal{D}_{m,0} = \mathcal{D}_m$. 

The generalized curvature-dimension condition $CD_{m,\alpha}(0,d)$ is defined as follows.

\begin{definition}
\label{DefCDGen}
Let $m>1$ and $\alpha \in [0,1]$. We say that the operator $L$ from (\ref{eq:operator}) satisfies the condition $CD_{m,\alpha} (0,d)$ with some $d>0$ at $x \in X$, if for every function $u: X \to (0,\infty)$ such that the function $v = \frac{m}{m-1} u^{m-1}$ satisfies
\begin{align}
-\Big(Lv + \alpha \frac{\tilde{\Psi}_\Upsilon^{(m)} (v)}{(m-1) v}\Big)(x) >0 \text{ and } -\Big(Lv + \alpha \frac{\tilde{\Psi}_\Upsilon^{(m)} (v)}{(m-1) v}\Big) (x) \geq -\Big(Lv + \alpha \frac{\tilde{\Psi}_\Upsilon^{(m)} (v)}{(m-1) v} \Big) (y) 
\label{MaxPropG}
\end{align}
for all $y$ with $k(x,y)>0$, there holds
\begin{equation}
\mathcal{D}_{m,\alpha} (u)(x) \geq \frac{1}{d} \Big(-\Big(Lv + \alpha \frac{\tilde{\Psi}_\Upsilon^{(m)} (v)}{(m-1) v}\Big)(x) \Big)^2.
\label{CDInequAlt}
\end{equation}
We say that $L$ satisfies $CD_{m,\alpha} (0,d)$, if it satisfies $CD_{m,\alpha} (0,d)$ at any $x \in X$. 
\end{definition}

\begin{bemerkung}
As already mentioned, the condition $CD_{m,\alpha} (0,d)$ is closely related to $CD_m (0,d)$. In fact, $CD_{m,0} (0,d)$ is equivalent to $CD_m(0,d)$. Thus, $CD_{m,\alpha} (0,d)$ is a generalization of $CD_m(0,d)$ and gives more flexibility when dealing for example with the discrete Laplacian on the lattice $\Z$. 

The idea behind $CD_{m,\alpha} (0,d)$ is to study how the quantity $G=Lv + \alpha \frac{\tilde{\Psi}_\Upsilon^{(m)} (v)}{(m-1) v}$ evolves. It turns out that for any positive solution $u$ of the PME we have $\partial_t G=\mathcal{D}_{m,\alpha} (u)$.
\end{bemerkung}

\subsection{A positive result for the discrete Laplacian on $\Z$}

In this section we will prove that the condition $CD_{m,1} (0,d)$ is satisfied on the unweighted lattice $\Z$. The proof will be straightforward but needs quite a few calculations.

\begin{satz}
\label{SatzAltCD}
For all $m>1$, the discrete Laplacian on the (unweighted) lattice $\Z$  satisfies $CD_{m,1} \big(0, \frac{1}{m-1}\big)$.
\end{satz}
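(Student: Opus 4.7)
The strategy exploits an algebraic simplification specific to $\alpha=1$. For every positive function $u$ on any graph, the quantity
\[
G(x) := Lv(x) + \frac{\tilde{\Psi}_\Upsilon^{(m)}(v)(x)}{(m-1)v(x)}
\]
reduces to $\frac{Lu^m(x)}{u(x)}$. I would prove this by substituting $v = \frac{m}{m-1} u^{m-1}$ into the defining sums for $Lv$ and $\tilde{\Psi}_\Upsilon^{(m)}(v)$ from Definition~\ref{DefTildePsi}: all the $\tfrac{m}{m-1}$-powers collapse and what remains is exactly $\frac{1}{u(x)}\sum_y k(x,y)[u^m(y)-u^m(x)]$. (Heuristically this matches \eqref{DiscreteChainRuleV}, which gives $\partial_t v = (m-1)vG$ for a PME solution, but the identity itself is purely pointwise and uses no evolution equation.) With this reduction, the hypothesis \eqref{MaxPropG} at $x$ becomes simply $G(x)<0$ together with $G(x)\le G(y)$ for every neighbour $y$ of $x$.

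I then specialize to the unweighted lattice $\mathbb{Z}$ and set $a:=u(x-1)$, $b:=u(x)$, $c:=u(x+1)$. The identity $Lu^m(y) = u(y)\,G(y)$, valid at every vertex, lets me rewrite each Laplacian appearing in $\mathcal{D}_{m,1}(u)(x)$ as a multiple of a $G$-value at $x-1$, $x$, or $x+1$. Expanding the two-term sum defining $\mathcal{D}_{m,1}$ and using the elementary relation $a^m + c^m = bG(x) + 2b^m$ (which is just the definition of $G(x)$ rearranged) to absorb the residual $G(x)$-pieces, one arrives at the clean formula
\[
\mathcal{D}_{m,1}(u)(x) = \frac{m\bigl[a^m G(x-1) + c^m G(x+1)\bigr]}{b} - 2m b^{m-1} G(x) - G(x)^2.
\]
Subtracting $(m-1)G(x)^2$ and invoking $2b^{m-1} + G(x) = (a^m+c^m)/b$ once more to rewrite $2m b^{m-1} G(x) + m G(x)^2$ as $\frac{m(a^m+c^m)G(x)}{b}$, the target inequality $\mathcal{D}_{m,1}(u)(x) \ge (m-1)G(x)^2$ collapses to
\[
a^m\bigl[G(x-1)-G(x)\bigr] + c^m\bigl[G(x+1)-G(x)\bigr] \ge 0,
\]
which is immediate from $G(x) \le G(x\pm 1)$ together with $a,c>0$. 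The same calculation works verbatim at every vertex of $\mathbb{Z}$, yielding $CD_{m,1}(0,\tfrac{1}{m-1})$.

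The main technical hurdle is the bookkeeping required to verify the identity $G = Lu^m/u$ and to rewrite $\mathcal{D}_{m,1}(u)(x)$ purely in terms of the $G$-values at $x$ and its two neighbours; once those algebraic identities are in place the curvature-dimension inequality drops out without any case distinction, in sharp contrast to the delicate analysis for complete graphs in Example~\ref{BspVollstGraph}. It is also worth noting that the strict negativity $G(x)<0$ from \eqref{MaxPropG} is not needed for the core inequality: only the local maximum property of $-G$ is exploited, which reflects the parabolic-maximum-principle flavour underlying the whole $CD$-framework and explains why the constant $d=\tfrac{1}{m-1}$ is uniform in the behaviour of $u$ at distant vertices.
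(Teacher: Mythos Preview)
Your proof is correct and takes a genuinely different route from the paper's. The paper works entirely in ratio variables of the pressure $v$: it introduces $a=v(z+1)/v(z)$, $b=v(z-1)/v(z)$ and the second-neighbour ratios $\sigma,\nu$, translates the maximum conditions (\Romannum{2}), (\Romannum{3}) into lower bounds on $\sigma^{m/(m-1)}$ and $\nu^{m/(m-1)}$, and then substitutes these into a long explicit expansion of $\mathcal{D}_{m,1}(u)(z)\big/\big(\tfrac{(m-1)^2}{m^2}v^2(z)\big)$; after several lines of algebra the expression collapses to $(m-1)$ times the corresponding normalisation of $G(z)^2$. Your argument instead isolates the pointwise identity $G=Lu^m/u$ (which the paper never states, though it is implicit in the computation of $-G(z)$ there) and uses it to rewrite $\mathcal{D}_{m,1}(u)(x)$ directly in terms of the values $G(x\pm1)$, so that the target inequality becomes the single line $a^m[G(x-1)-G(x)]+c^m[G(x+1)-G(x)]\ge 0$.

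What this buys you is not just brevity but generality: nothing in your manipulation uses the $\mathbb{Z}$-structure. Carried out on an arbitrary locally finite graph the same substitutions give
\[
\mathcal{D}_{m,1}(u)(x)-(m-1)G(x)^2=\frac{m}{u(x)}\sum_{y}k(x,y)\,u^m(y)\,[G(y)-G(x)]\ge 0,
\]
so $CD_{m,1}\big(0,\tfrac{1}{m-1}\big)$ in fact holds on \emph{every} locally finite graph, subsuming Theorem~\ref{SatzAltCD} and Corollary~\ref{KorollarSquare} simultaneously. The paper's brute-force computation, by contrast, is completely explicit and requires no auxiliary identity, but it obscures this universality and has to be repeated (with $\sigma=\nu$) to obtain the square case.
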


\begin{proof}
First, we recall the definition of $\tilde{\Psi}_\Upsilon^{(m)} $ from Definition \ref{DefTildePsi}. For functions $u \colon \Z \to (0,\infty)$ and $x\in \iZ$ we have
\begin{align*}
\tilde{\Psi}_{\Upsilon}^{(m)} (u)(x) =\sum_{y \in\{ x-1,x+1\}} \Big(\frac{m-1}{m} v(x)^2 + \frac{(m-1)^2}{m} \big(v(x)\big)^\frac{m-2}{m-1} \big(v(y)\big)^\frac{m}{m-1} - (m-1) v(x) v(y)\Big). 
\end{align*}
Define $ G(x) = Lv(x) + \frac{\tilde{\Psi}_\Upsilon^{(m)} (v)}{(m-1) v} (x)$. Consider a function $u$ such that $v = \frac{m}{m-1} u^{m-1}$ satisfies (\ref{MaxPropG}) at $z\in \iZ$. We then have
\begin{align*}
-G(z) &= 2v(z)-v(z+1)-v(z-1) -\frac{1}{v(z)} \Big( \frac{2}{m} v^2(z) -v(z)v(z+1)-v(z)v(z-1) \\ & \qquad \qquad + \frac{m-1}{m} v^\frac{m-2}{m-1} (z) v^\frac{m}{m-1} (z+1) + \frac{m-1}{m} v^\frac{m-2}{m-1} (z) v^\frac{m}{m-1} (z-1)\Big)\\ 
&= \Big(2-\frac{2}{m}\Big) v(z) -\frac{m-1}{m} v^{-\frac{1}{m-1}} (z) v^\frac{m}{m-1} (z+1)-\frac{m-1}{m} v^{-\frac{1}{m-1}} (z) v^\frac{m}{m-1} (z-1), \\
-G(z+1) &= \Big(2-\frac{2}{m}\Big) v(z+1) -\frac{m-1}{m} v^{-\frac{1}{m-1}} (z+1) v^\frac{m}{m-1} (z+2) \\ & \qquad \qquad   -\frac{m-1}{m} v^{-\frac{1}{m-1}} (z+1) v^\frac{m}{m-1} (z), \\
 -G(z-1) &= \Big(2-\frac{2}{m}\Big) v(z-1) -\frac{m-1}{m} v^{-\frac{1}{m-1}} (z-1) v^\frac{m}{m-1} (z-2) \\ & \qquad \qquad  -\frac{m-1}{m} v^{-\frac{1}{m-1}} (z-1) v^\frac{m}{m-1} (z). 
\end{align*}
Thus, defining the variables $a = \frac{v(z+1)}{v(z)}$, $b = \frac{v(z-1)}{v(z)}$, $\sigma = \frac{v(z+2)}{v(z)}$ and $\nu = \frac{v(z-2)}{v(z)}$, (\ref{MaxPropG}) gives
\begin{align*}
(\text{\Romannum{1}}): \ &-G(z) > 0 \Leftrightarrow 2 - a^\frac{m}{m-1} -b^\frac{m}{m-1} >0 \\
(\text{\Romannum{2}}): \ &-G(z) \geq -G(z+1) \Leftrightarrow 2-a^\frac{m}{m-1} -b^\frac{m}{m-1} \geq 2a-a^{-\frac{1}{m-1}} \sigma^\frac{m}{m-1} - a^{-\frac{1}{m-1}}\\
(\text{\Romannum{3}}): \ &-G(z) \geq -G(z-1) \Leftrightarrow 2-a^\frac{m}{m-1} -b^\frac{m}{m-1} \geq 2b-b^{-\frac{1}{m-1}} \nu^\frac{m}{m-1} - b^{-\frac{1}{m-1}}.
\end{align*}
(\Romannum{2}) and (\Romannum{3}) are equivalent to
\begin{align*}
(\text{\Romannum{2}})^\prime: \ &\sigma^\frac{m}{m-1} \geq 2a^\frac{m}{m-1}-1-2a^\frac{1}{m-1}+a^\frac{m+1}{m-1}+a^\frac{1}{m-1} b^\frac{m}{m-1}\\
(\text{\Romannum{3}})^\prime: \ &\nu^\frac{m}{m-1} \geq 2b^\frac{m}{m-1}-1-2b^\frac{1}{m-1}+b^\frac{m+1}{m-1}+a^\frac{m}{m-1} b^\frac{1}{m-1}.
\end{align*}

We next have a closer look at $\mathcal{D}_{m,1} (u)(z)$. We have
\begin{align*}
\mathcal{D}_{m,1}& (u)(z) = -2 \frac{(m-1)^3}{m^2} v^\frac{m-2}{m-1} (z) Lv^\frac{m}{m-1} (z) - \frac{(m-1)^2}{m^2} v^{-\frac{2}{m-1}} (z) Lv^\frac{m}{m-1} (z) v^\frac{m}{m-1} (z+1) \\ &+ \frac{(m-1)^2}{m} v^{-\frac{1}{m-1}} (z) v (z+1) Lv^\frac{m}{m-1} (z+1) - \frac{(m-1)^2}{m^2} v^{-\frac{2}{m-1}} (z) Lv^\frac{m}{m-1} (z) v^\frac{m}{m-1} (z-1) \\&+ \frac{(m-1)^2}{m} v^{-\frac{1}{m-1}} (z) v (z-1) Lv^\frac{m}{m-1} (z-1).
\end{align*}
Hence,
\begin{align}
\frac{\mathcal{D}_{m,1} (u)(z)}{\frac{(m-1)^2}{m^2} v^2(z)} &= -2 (m-1) \big(a^\frac{m}{m-1}+b^\frac{m}{m-1}-2\big) - a^\frac{m}{m-1} \big(a^\frac{m}{m-1}+b^\frac{m}{m-1}-2\big) \notag \\ 
&\mkern-18mu \mkern-18mu+ m  a \big(\sigma^\frac{m}{m-1} +1-2a^\frac{m}{m-1}\big) - b^\frac{m}{m-1} \big(a^\frac{m}{m-1}+b^\frac{m}{m-1}-2\big) + m b  \big(\nu^\frac{m}{m-1} +1-2b^\frac{m}{m-1}\big).
\label{GleichungDtG}
\end{align}
Now,
\begin{align*}
\frac{\big(-G(z)\big)^2}{\frac{(m-1)^2}{m^2} v^2(z)} &= \big(2-a^\frac{m}{m-1} -b^\frac{m}{m-1}\big)^2 = 4+a^\frac{2m}{m-1}+b^\frac{2m}{m-1} -4a^\frac{m}{m-1}-4b^\frac{m}{m-1} + 2a^\frac{m}{m-1}b^\frac{m}{m-1}.
\end{align*}
Using (\Romannum{2})$^\prime$ and (\Romannum{3})$^\prime$ in (\ref{GleichungDtG}) then yields
\begin{align*}
\frac{\mathcal{D}_{m,1} (u)(z)}{\frac{(m-1)^2}{m^2} v^2(z)} &\geq -2 (m-1) \big(a^\frac{m}{m-1}+b^\frac{m}{m-1}-2\big) - a^\frac{m}{m-1} \big(a^\frac{m}{m-1}+b^\frac{m}{m-1}-2\big)+ m  a \big(1-2a^\frac{m}{m-1}\big) \notag \\ 
&  +ma\big(2a^\frac{m}{m-1}-1-2a^\frac{1}{m-1}+a^\frac{m+1}{m-1}+a^\frac{1}{m-1} b^\frac{m}{m-1}\big) - b^\frac{m}{m-1} \big(a^\frac{m}{m-1}+b^\frac{m}{m-1}-2\big) \\ &+ mb \big(2b^\frac{m}{m-1}-1-2b^\frac{1}{m-1}+b^\frac{m+1}{m-1}+a^\frac{m}{m-1} b^\frac{1}{m-1}\big) + m b  \big(1-2b^\frac{m}{m-1}\big) \\ &\mkern-18mu= (m-1) \big(4+a^\frac{2m}{m-1}+b^\frac{2m}{m-1} -4a^\frac{m}{m-1}-4b^\frac{m}{m-1} + 2a^\frac{m}{m-1}b^\frac{m}{m-1}\big) \\ &\mkern-18mu= (m-1) \frac{\big(-G(z)\big)^2}{\frac{(m-1)^2}{m^2} v^2(z)},
\end{align*}
which yields the statement.
\end{proof}

As a byproduct, Theorem \ref{SatzAltCD} also yields new findings on the square.

\begin{korollar}
\label{KorollarSquare}
For all $m>1$, the operator $L$ from Subsection \ref{SectionSquare} satisfies $CD_{m,1} \big(0,\frac{1}{m-1}\big)$. 
\end{korollar}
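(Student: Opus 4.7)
The plan is to reduce Corollary \ref{KorollarSquare} directly to Theorem \ref{SatzAltCD} by a \emph{local-isomorphism} argument. The key observation is that, as far as the $CD_{m,1}(0,d)$-condition is concerned, the unweighted square is indistinguishable from the unweighted lattice $\iZ$: for any vertex $w$, the quantities $-G(w)$, $-G(y')$ for $y'$ a neighbour of $w$, and $\mathcal{D}_{m,1}(u)(w)$ depend only on the values of $v$ at $w$, at its two neighbours, and at the vertices at graph distance two from $w$. On $\iZ$ the two distance-two vertices $z\pm 2$ are distinct, whereas on the square there is a unique vertex $w'$ (the opposite corner) at distance two from $w$, reached through either neighbour.

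Concretely, I would fix an arbitrary vertex $w$ of the square, let $y_1',y_2'$ denote its two neighbours and $w'$ the opposite corner, and introduce
\[
a=\frac{v(y_1')}{v(w)},\qquad b=\frac{v(y_2')}{v(w)},\qquad s=\frac{v(w')}{v(w)}.
\]
The next step is to verify by direct inspection that, under the identification $z\leftrightarrow w$, $z+1\leftrightarrow y_1'$, $z-1\leftrightarrow y_2'$, $z\pm 2\leftrightarrow w'$, each of $-G(w),\,-G(y_1'),\,-G(y_2')$ and $\mathcal{D}_{m,1}(u)(w)$ on the square is given by exactly the same expression in $a,b,s$ as the corresponding quantity on $\iZ$ evaluated with $\sigma=\nu=s$. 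This holds because the discrete Laplacian at each of $w,y_1',y_2'$ is a sum over exactly two neighbours whose labels match the $\iZ$ pattern once $z\pm 2$ are both identified with $w'$; in particular the formula for $\mathcal{D}_{m,1}(u)(w)$ on the square coincides with the right-hand side of \eqref{GleichungDtG} with $\sigma=\nu=s$.

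Under this identification, the maximality condition (MaxPropG) at $w$ translates verbatim into the conditions (I), (II)$'$ and (III)$'$ from the proof of Theorem \ref{SatzAltCD} (with $\sigma=\nu=s$). Substituting (II)$'$ and (III)$'$ into the analogue of \eqref{GleichungDtG} and carrying out exactly the same algebraic simplification as in that proof then yields
\[
\mathcal{D}_{m,1}(u)(w)\,\geq\,(m-1)\bigl(-G(w)\bigr)^2,
\]
which is precisely $CD_{m,1}\bigl(0,\tfrac{1}{m-1}\bigr)$ at $w$. Since $w$ was arbitrary, the claim follows. No genuine obstacle is anticipated: the argument is essentially bookkeeping to check the coincidence of the local $2$-neighbourhood computations, after which the algebraic core of Theorem \ref{SatzAltCD} applies without modification.
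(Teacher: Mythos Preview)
Your proposal is correct and is essentially the same approach as the paper's own proof, which states in one line that the argument is identical to that of Theorem~\ref{SatzAltCD} with the sole modification $\sigma=\nu$. Your local-isomorphism framing just makes explicit the bookkeeping behind this remark.
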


\begin{proof}
The proof is identical to the one of Theorem \ref{SatzAltCD} with the only difference that $\sigma = \nu$.
\end{proof}
\section{Aronson-Bénilan estimate}
In this chapter we will prove a discrete version of the Aronson-B\'{e}nilan estimate on finite graphs. As we will see, the condition $CD_{m,\alpha} (0,d)$ will play a key role.
\begin{satz}
\label{ThmAronsonBen}
Let $m >1$, $X\neq \emptyset$ be a finite set, $k: X \times X \to [0,\infty)$, and assume that the operator $L$ generated by $k$ satisfies $CD_{m,\alpha} (0,d)$ with some $d>0$ and $\alpha\in [0,1]$. Suppose that $u \colon [0,\infty) \times X \to (0,\infty)$ is $C^1$ in time and solves the porous medium equation $\partial_t u - Lu^m = 0$ on $(0,\infty) \times X$. Then the function $v = \frac{m}{m-1} u^{m-1}$ satisfies
\begin{align}
-\Big(Lv + \alpha \frac{\tilde{\Psi}_\Upsilon^{(m)} (v)}{(m-1) v}\Big) \leq \frac{d}{t}\;\; \text{ in } (0,\infty) \times X,
\label{LiYauInequ1}
\end{align}
and thus
\begin{align}
(1-\alpha) \frac{\tilde{\Psi}_\Upsilon^{(m)} (v)}{(m-1)v} - \frac{\partial_t v }{(m-1) v} \leq \frac{d}{t}\;\; \text{ in } (0,\infty) \times X.
\label{LiYauInequ2}
\end{align}
\end{satz}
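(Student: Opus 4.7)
Set
$$G(t,x) := Lv(t,x) + \alpha\,\frac{\tilde{\Psi}_\Upsilon^{(m)}(v)(t,x)}{(m-1)v(t,x)},\qquad t>0,\;x\in X.$$
Using the pressure equation $\partial_t v=(m-1)vLv+\tilde{\Psi}_\Upsilon^{(m)}(v)$ from Proposition~\ref{basicprop}(i) to eliminate $\tilde{\Psi}_\Upsilon^{(m)}(v)$ gives
$$-G = (1-\alpha)\,\frac{\tilde{\Psi}_\Upsilon^{(m)}(v)}{(m-1)v}-\frac{\partial_t v}{(m-1)v},$$
so \eqref{LiYauInequ2} is equivalent to \eqref{LiYauInequ1} and it suffices to prove $-G\le d/t$. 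The first step is the evolution identity $\partial_t G=\mathcal{D}_{m,\alpha}(u)$ on $(0,\infty)\times X$, announced in the remark following Definition~\ref{DefCDGen}. I would derive it by writing $G=(1-\alpha)Lv+\alpha (Lu^m)/u$ (using $\partial_t u=Lu^m$ and the pressure equation, which turns $\tilde{\Psi}_\Upsilon^{(m)}(v)/((m-1)v)$ into $\partial_t v/((m-1)v)=\partial_t u/u$) and then differentiating edge-by-edge in $t$. The subcase $\alpha=0$ is transparent since $u^{m-2}Lu^m=\tfrac{1}{m}\partial_t v$ and hence $\mathcal{D}_m(u)=L\partial_t v=\partial_t Lv$; the general $\alpha$-case produces the extra weights $u(y)/u(x)$ and $(u(y)/u(x))^m$ that appear in the definition of $\mathcal{D}_{m,\alpha}$.

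The bound $-G\le d/t$ is then proved by a parabolic maximum principle on the finite set $X$ combined with a shifted barrier. For $\varepsilon,\delta>0$ consider
$$\Phi_{\varepsilon,\delta}(t,x) := -G(t,x)-\frac{d+\delta}{t-\varepsilon},\qquad t>\varepsilon,\;x\in X.$$
Because $G$ is continuous in $t$ (from $u\in C^1_t$ and $\#X<\infty$) and $(d+\delta)/(t-\varepsilon)\to+\infty$ as $t\downarrow\varepsilon$, one has $\Phi_{\varepsilon,\delta}<0$ near $t=\varepsilon$. Assume for contradiction that $\Phi_{\varepsilon,\delta}$ attains the value $0$ somewhere on $(\varepsilon,\infty)\times X$; then by continuity of $t\mapsto\max_{x\in X}\Phi_{\varepsilon,\delta}(t,x)$ there is a smallest time $t_0>\varepsilon$ at which this maximum equals $0$, realised at some $x_0\in X$. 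At $(t_0,x_0)$ one has $-G(t_0,x_0)=(d+\delta)/(t_0-\varepsilon)>0$ and $-G(t_0,y)\le -G(t_0,x_0)$ for every $y$ with $k(x_0,y)>0$, so hypothesis \eqref{MaxPropG} holds at $x_0$. Applying \eqref{CDInequAlt} and the identity of Step~1 yields
$$\partial_t G(t_0,x_0)=\mathcal{D}_{m,\alpha}(u)(t_0,x_0)\ge \tfrac{1}{d}\big(-G(t_0,x_0)\big)^2=\tfrac{(d+\delta)^2}{d(t_0-\varepsilon)^2},$$
whence
$$\partial_t\Phi_{\varepsilon,\delta}(t_0,x_0)=-\partial_t G(t_0,x_0)+\tfrac{d+\delta}{(t_0-\varepsilon)^2}\le -\tfrac{(d+\delta)\delta}{d(t_0-\varepsilon)^2}<0.$$
On the other hand, the function $\Phi_{\varepsilon,\delta}(\cdot,x_0)$ is $C^1$ in $t$ and, by minimality of $t_0$, satisfies $\Phi_{\varepsilon,\delta}(t,x_0)\le\max_y\Phi_{\varepsilon,\delta}(t,y)<0=\Phi_{\varepsilon,\delta}(t_0,x_0)$ for $t<t_0$ close to $t_0$, forcing $\partial_t\Phi_{\varepsilon,\delta}(t_0,x_0)\ge 0$. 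This contradiction shows $\Phi_{\varepsilon,\delta}<0$ on all of $(\varepsilon,\infty)\times X$. Sending $\varepsilon\downarrow 0$ and then $\delta\downarrow 0$ gives \eqref{LiYauInequ1}, and \eqref{LiYauInequ2} follows from the equivalence noted above.

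The main obstacle is Step~1, the derivation of $\partial_t G=\mathcal{D}_{m,\alpha}(u)$; everything that follows is a rather standard barrier/maximum-principle argument on a finite graph, using only the continuity of $G$ in $t$ and the fact that \eqref{MaxPropG} is automatic at a spatial maximiser of $-G$. The perturbation $\delta$ is inserted to convert the non-strict $CD$-inequality into a strict comparison at a first crossing, and the shift $\varepsilon$ avoids having to analyse the behaviour of $G$ near $t=0$.
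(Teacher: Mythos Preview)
Your proposal is correct and follows the same overall strategy as the paper: establish the identity $\partial_t G=\mathcal{D}_{m,\alpha}(u)$ and then run a parabolic maximum-principle argument in which the $CD_{m,\alpha}(0,d)$ condition is invoked at a spatial maximiser of $-G$.

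The differences are only in the technical implementation. For the evolution identity, the paper expands $G$ directly in terms of $v$ and differentiates termwise, whereas your rewriting $G=(1-\alpha)Lv+\alpha\,Lu^m/u$ (valid along solutions of the PME) makes the computation cleaner and exhibits $\mathcal{D}_{m,\alpha}$ as the convex combination $(1-\alpha)\,\partial_t Lv+\alpha\,\partial_t(Lu^m/u)$. For the comparison step, the paper works with the auxiliary function $\tilde G=-\tfrac{t}{d}G$ on the compact cylinder $[0,t_1]\times X$: since $\tilde G(0,\cdot)=0$, a positive global maximum forces $t_*>0$, and then $\partial_t\tilde G(t_*,x_*)\ge 0$ together with the CD inequality gives $\tilde G(t_*,x_*)\le 1$ directly, without any $\varepsilon$ or $\delta$. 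Your shifted strict barrier $\Phi_{\varepsilon,\delta}=-G-\tfrac{d+\delta}{t-\varepsilon}$ with a first-crossing argument is equally valid; it is slightly longer but has the advantage of not using that $u$ is defined and regular down to $t=0$, so it would also cover solutions on $(0,\infty)\times X$.
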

\begin{proof}
On $[0,\infty) \times X$ we define the function $G (t,x) = Lv(t,x) + \alpha \frac{\tilde{\Psi}_\Upsilon^{(m)} (v)}{(m-1) v} (t,x)$ and set
\begin{align*}
\tilde{G} (t,x) = -\frac{t}{d} G(t,x).
\end{align*}
Let $t_1 >0$ be arbitrarily fixed. Suppose that $\tilde{G}$ (restricted to the set $[0,t_1] \times X$) assumes its global maximum at $(t_*,x_*) \in [0,t_1] \times X$. W.l.o.g. we may assume that $\tilde{G} (t_*,x_*)>0$, since otherwise (\ref{LiYauInequ1}) follows directly. By definition of $\tilde{G}$ it is then clear that $t_*>0$ and thus $(\partial_t \tilde{G}) (t_*,x_*) \geq 0$.
As $u$ solves the discrete PME we have 
\begin{align*}
\partial_t v = m u^{m-2} \partial_t u = m u^{m-2} Lu^m 
\end{align*}
and thus 
\begin{align*}
\partial_t &G (t,x) = \partial_t \Big(Lv(t,x) + \alpha \frac{\tilde{\Psi}_\Upsilon^{(m)} (v)}{(m-1) v} (t,x)\Big) \\&= \partial_t \Big(\sum_{y \in X} k(x,y)\Big[(1-\alpha) v(t,y)-\big(1-\frac{\alpha}{m}\big) v(t,x)+ \alpha \frac{m-1}{m} v^{-\frac{1}{m-1}}(t,x) v^\frac{m}{m-1} (t,y) \Big]\Big) \\ 
&= \sum_{y \in X} k(x,y) \Big(\Big[1-\alpha+\alpha \frac{u(y)}{u(x)}\Big] mu^{m-2}(t,y) Lu^m(t,y)\\
& \quad\quad\quad -\Big[m-\alpha+\alpha \Big(\frac{u(t,y)}{u(t,x)}\Big)^m\Big] u^{m-2}(t,x)Lu^m(t,x)\Big) \\
&=\mathcal{D}_{m,\alpha} (u)(t,x) \text{ in } (0,\infty) \times X,
\end{align*}
from which we deduce that
\begin{align*}
\partial_t \tilde{G} (t,x) = -\frac{1}{d} G(t,x) -\frac{t}{d} \mathcal{D}_{m,\alpha} (u) (t,x).
\end{align*}
It follows that at the maximum point $(t_*,x_*)$ we have that
\begin{align*}
0 \leq -G-t_* \mathcal{D}_{m,\alpha} (u) ,
\end{align*}
which is equivalent to 
\begin{align*}
\mathcal{D}_{m,\alpha} (u) \leq \frac{-G}{t_*}
\end{align*}
at $(t_*,x_*)$. Since $-G(t_*,x_*)$ is the global maximum of $-G(t_*,x), \ x \in X,$ we may apply condition $CD_{m,\alpha} (0,d)$, which gives
\begin{align*}
\frac{1}{d} \big(-G(t_*,x_*)\big)^2 \leq \mathcal{D}_m (u)(t_*,x_*) \leq \frac{-G(t_*,x_*)}{t_*},
\end{align*}
and thus
\begin{align*}
\tilde{G} (t_*,x_*) \leq 1.
\end{align*}
Since $(t_*,x_*)$  was a global maximum point of $\tilde{G}$ restricted to the set $[0,t_1] \times X$ with $t_1>0$ arbitrarily chosen, we obtain
\begin{align*}
\tilde{G} (t_1,x) \leq \tilde{G} (t_*,x_*) \leq 1,\quad t_1 \in (0, \infty), \ x \in X.
\end{align*}
This shows (\ref{LiYauInequ1}), which in turn, together with (\ref{DiscreteChainRuleV}), implies inequality (\ref{LiYauInequ2}).
\end{proof}
In the important special case $\alpha=0$, Theorem \ref{ThmAronsonBen} takes the following form.
\begin{korollar}
\label{KorAronsonBen}
Let $m >1$, $X\neq \emptyset$ be a finite set, $k: X \times X \to [0,\infty)$, and assume that the operator $L$ generated by $k$ satisfies $CD_{m} (0,d)$ with some $d>0$. Suppose that $u \colon [0,\infty) \times X \to (0,\infty)$ is $C^1$ in time and solves the equation $\partial_t u - Lu^m = 0$ on $(0,\infty) \times X$. Then the function $v = \frac{m}{m-1} u^{m-1}$ satisfies
\begin{align*}
-Lv \leq \frac{d}{t}\quad \text{ in } (0,\infty) \times X,
\end{align*}
and thus
\begin{align*}
\frac{\tilde{\Psi}_\Upsilon^{(m)} (v)}{(m-1)v} - \frac{\partial_t v }{(m-1) v} \leq \frac{d}{t}\quad \text{ in } (0,\infty) \times X.
\end{align*}
\end{korollar}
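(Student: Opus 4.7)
The plan is to deduce the corollary directly from Theorem \ref{ThmAronsonBen} applied with $\alpha = 0$. First, I would verify that the hypothesis $CD_m(0,d)$ here matches the hypothesis $CD_{m,0}(0,d)$ of the theorem. Inspecting Definition \ref{DefCDGen} with $\alpha = 0$, the quantity $Lv + \alpha \tilde{\Psi}_\Upsilon^{(m)}(v)/((m-1)v)$ collapses to $Lv$, so the maximum-point condition (\ref{MaxPropG}) reduces to (\ref{MaxProp}). Similarly, the operator $\mathcal{D}_{m,\alpha}$ from Section \ref{SubsecZ} reduces to $\mathcal{D}_m$ from (\ref{MathcalD}): each summand becomes $m\bigl(u^{m-2}(y) L u^m(y) - u^{m-2}(x) L u^m(x)\bigr)$. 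Hence (\ref{CDInequAlt}) coincides with (\ref{CD-Bedingung}), confirming $CD_{m,0}(0,d) \Leftrightarrow CD_m(0,d)$, as already noted in the remark following Definition \ref{DefCDGen}.

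Second, I would substitute $\alpha = 0$ into the conclusions (\ref{LiYauInequ1}) and (\ref{LiYauInequ2}) of Theorem \ref{ThmAronsonBen}. In (\ref{LiYauInequ1}) the $\alpha$-term vanishes, leaving $-Lv \leq d/t$, which is the first assertion. In (\ref{LiYauInequ2}) the factor $(1-\alpha)$ becomes $1$, giving $\tilde{\Psi}_\Upsilon^{(m)}(v)/((m-1)v) - \partial_t v/((m-1)v) \leq d/t$, which is the second assertion.

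Since both steps are direct specializations with no additional calculation required, there is no essential obstacle; the corollary is a genuine consequence of the more general Theorem \ref{ThmAronsonBen} once the identifications $CD_{m,0} = CD_m$ and $\mathcal{D}_{m,0} = \mathcal{D}_m$ are made explicit. A self-contained alternative would be to rerun the maximum-principle argument of Theorem \ref{ThmAronsonBen} with $G := Lv$, using that $\partial_t (Lv) = \mathcal{D}_m(u)$ along any positive solution of the PME (which is precisely the $\alpha = 0$ instance of the identity $\partial_t G = \mathcal{D}_{m,\alpha}(u)$ derived inside that proof), and applying $CD_m(0,d)$ at the maximum point of $\tilde G(t,x) = -(t/d) Lv(t,x)$ on $[0,t_1] \times X$ to get $\tilde G \leq 1$; but this merely duplicates the argument already given.
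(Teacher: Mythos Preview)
Your proposal is correct and matches the paper's approach exactly: the paper's proof is the single line ``Set $\alpha=0$ in Theorem \ref{ThmAronsonBen}.'' Your additional verification that $CD_{m,0}(0,d)$ coincides with $CD_m(0,d)$ and that the conclusions specialize appropriately simply spells out what the paper leaves implicit.
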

\begin{proof}
Set $\alpha=0$ in Theorem \ref{ThmAronsonBen}.
\end{proof}

\begin{beispiel}
\label{BspLiYau}
(i) \emph{The square.} Let $X$ and $L$ be as in Section \ref{SectionSquare}. By Corollary \ref{KorollarSquare}, we know that for any $m>1$ the operator $L$ satisfies $CD_{m,1} \big(0,\frac{1}{m-1}\big)$. With Theorem \ref{ThmAronsonBen} we find that for any $u \colon [0,\infty) \times X \to (0,\infty)$ which solves the equation $\partial_t u - Lu^m =0$ on $(0,\infty) \times X$, the function $v= \frac{m}{m-1} u^{m-1}$ enjoys the estimate
\begin{align*}
-\Big(Lv +  \frac{\tilde{\Psi}_\Upsilon^{(m)} (v)}{(m-1) v}\Big) \leq \frac{1}{(m-1) t}\quad \text{ in } (0,\infty) \times X,
\end{align*} 
or equivalently
\begin{align*}
- \frac{\partial_t v }{v} \leq \frac{1}{(m-1) t}\quad \text{ in } (0,\infty) \times X.
\end{align*}
In the special case $m=2$, we further know from Section \ref{SectionSquare} that $L$ satisfies $CD_2 \big(0,\frac{4}{3}\big)$. For $v$ as above we then get with Corollary \ref{KorAronsonBen}
\begin{align*}
-Lv (t,x) \leq \frac{4}{3t}\quad \text{ in } (0,\infty) \times X,
\end{align*} 
or equivalently
\begin{align*}
\frac{\tilde{\Psi}_\Upsilon^{(2)} (v)}{v} - \frac{\partial_t v }{v} \leq \frac{4}{3t}\quad \text{ in } (0,\infty) \times X.
\end{align*}
(ii) \emph{Complete graphs.} Consider again the complete, unweighted graph with $D$ vertices ($D \in \N$) and let the operator $L$ be as in Example \ref{BspVollstGraph}. We already saw that $L$ satisfies $CD_m (0,d)$ with $d= \frac{m}{(m-1)^2}$, which is optimal for $m>2$. In this case, Corollary \ref{KorAronsonBen} shows that any positive solution $u$ to the porous medium equation $\partial_t u - L u^m = 0$ on $(0,\infty) \times X$ satisfies the Aronson-Bénilan estimate
\begin{align*}
-Lv (t,x) \leq \frac{m}{(m-1)^2 t}\quad \text{ in } (0,\infty) \times X,
\end{align*}
where $v= \frac{m}{m-1} u^{m-1}$. As we saw, this estimate can be sharpened in the case $m \in (1,2]$. For example, if $m=2$ there holds
\begin{align}
-Lv (t,x) \leq \frac{2(D-1)}{D t}\quad \text{ in } (0,\infty) \times X.
\label{BspD=2}
\end{align}

In the case $D=2$ and $m=2$ we can check the quality of our estimate by calculating the exact solution. To this end, consider a positive solution $u(t,x) = \big(u_1 (t),u_2(t) \big)$ to the PME $\partial_t u -Lu^2 = 0$ on 
$(0,\infty) \times  \{x_1,x_2\}$, i.e. $u$ satisfies
\begin{align*}
\begin{cases}
\dot{u}_1 = u_2^2-u_1^2, \ u_1(0) =: a_1 >0, \\
\dot{u}_2 = u_1^2-u_2^2, \ u_2(0) =: a_2 >0.
\end{cases}
\end{align*}
Since $\dot{u}_1 + \dot{u}_2 = 0$ for all $t>0$ we have $u_1 (t) + u_2(t) = a_1+a_2 =: \lambda$ for all $t\geq 0$
(conservation of mass). Inserting $u_2 = \lambda-u_1$ into the first equation yields the linear equation
\begin{align*}
\dot{u}_1 = (\lambda - u_1)^2 -u_1^2 = \lambda^2 - 2\lambda u_1,
\end{align*}
which can be solved by variation of constants. This gives
\begin{align*}
u_1(t) = e^{-2\lambda t} a_1 + \lambda^2 \int_0^t e^{-2 \lambda (t-s)} ds = \frac{a_1-a_2}{2} e^{-2\lambda t} + \frac{\lambda}{2},
\end{align*}
and thus 
\begin{align*}
u_2(t) = \frac{a_2-a_1}{2} e^{-2\lambda t} + \frac{\lambda}{2}.
\end{align*}
For the function $v = \frac{m}{m-1} u^{m-1} = 2 u$, the previous relations yield
\begin{align*}
-Lv(t,x_1) = -2Lu_1(t) = 2 u_1(t) - 2u_2(t) = 2 (a_1-a_2) e^{-2(a_1+a_2) t}
\end{align*}
and 
\begin{align*}
-Lv(t,x_2) = -2Lu_2(t) = 2 u_2(t) - 2u_1(t) = 2 (a_2-a_1) e^{-2(a_1+a_2) t}.
\end{align*}
W.l.o.g. we may assume $a_1 \geq a_2$. Then
\begin{align*}
-Lv(t,x) \leq -Lv(t,x_1) \leq 2a_1 e^{-2a_1 t},
\end{align*}
which corresponds to the limit $a_2 \to 0$. Now, maximising the right-hand side (i.e.\ the function $f(x) = x e^{-tx}$ over $[0,\infty)$) yields
\begin{align*}
-Lv(t,x) \leq \frac{1}{et}.
\end{align*}
Comparing (\ref{BspD=2}) for $D=2$ with this optimal result, we see that we only miss the sharp constant by a factor of $e$.
\end{beispiel}

\section{Harnack inequality}
The aim of this section is to derive a Harnack inequality for solutions to the discrete PME by means of the Aronson-Benilan estimate, which can be viewed as a differential Harnack estimate. We will need the following technical lemma on the function $\tilde{\Upsilon}$ from Definition \ref{DefTildePsi}.
\begin{lemma}
\label{LemmaHarnack}
(i) If $m\in (1,2]$, then for all $x \in [1,\infty)$ there holds
\begin{align}
\tilde{\Upsilon} (\log x) \geq \frac{1}{2} (x-1)^2.
\label{UngleichungTildeUps}
\end{align}

(ii) If $m\in [2,\infty)$, then for all $x \in (0,1]$ we have
\begin{align}
\tilde{\Upsilon} (\log x) \geq \frac{1}{2} (x-1)^2.
\label{UngleichungTildeUps2}
\end{align}
\end{lemma}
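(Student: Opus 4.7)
The plan is to turn both inequalities into a single convexity comparison. First I would simplify $\tilde{\Upsilon}(\log x)$ explicitly: since $\Upsilon(r)=e^r-1-r$, plugging $r=\log x$ into the definition of $\tilde\Upsilon$ and using that $\Upsilon\!\bigl(\tfrac{m}{m-1}\log x\bigr)=x^{m/(m-1)}-1-\tfrac{m}{m-1}\log x$, the logarithmic terms cancel and one obtains the clean rational expression
\[
\tilde{\Upsilon}(\log x)=\frac{(m-1)^2}{m}\,x^{m/(m-1)}-(m-1)\,x+\frac{m-1}{m}.
\]

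Next, I would define $\psi(x):=\tilde{\Upsilon}(\log x)-\tfrac{1}{2}(x-1)^2$ for $x>0$ and check the two matching conditions at $x=1$. A direct substitution gives $\psi(1)=0$, and differentiating once yields $\psi'(x)=(m-1)\bigl(x^{1/(m-1)}-1\bigr)-(x-1)$, so also $\psi'(1)=0$. Differentiating once more gives the compact formula
\[
\psi''(x)=x^{(2-m)/(m-1)}-1.
\]

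Now the claim reduces to showing $\psi''\ge 0$ on the relevant interval in each case, after which $\psi(1)=\psi'(1)=0$ together with integration (from $x$ to $1$ and back) immediately produces $\psi\ge 0$. For part (i), $m\in(1,2]$ makes the exponent $(2-m)/(m-1)$ non-negative, so $x^{(2-m)/(m-1)}\ge 1$ for every $x\in[1,\infty)$. For part (ii), $m\in[2,\infty)$ makes the exponent non-positive, and then $x\in(0,1]$ again forces $x^{(2-m)/(m-1)}\ge 1$ (raising a number in $(0,1]$ to a non-positive power is $\ge 1$). In both cases $\psi''\ge 0$ on the interval under consideration, and the lemma follows.

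I do not expect any real obstacle: the only mild care is the sign bookkeeping when integrating $\psi''\ge 0$ from $x<1$ up to $1$ in case (ii), where one first concludes $\psi'(x)\le 0$ on $(0,1]$ and then $\psi(x)\ge \psi(1)=0$. The split into the two ranges of $m$ is forced precisely because the sign of the exponent $(2-m)/(m-1)$ flips at $m=2$, which is also why the two halves of the lemma hold on complementary halves of the real line around $x=1$.
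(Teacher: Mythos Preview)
Your proof is correct and follows essentially the same route as the paper: both compute the explicit form $\tilde{\Upsilon}(\log x)=\frac{(m-1)^2}{m}x^{m/(m-1)}-(m-1)x+\frac{m-1}{m}$, define the difference function (your $\psi$ is the paper's $q$), observe that it and its derivative vanish at $x=1$, and then use a convexity argument to control the sign. The only cosmetic difference is that you compute $\psi''(x)=x^{(2-m)/(m-1)}-1$ directly and integrate twice, whereas the paper rewrites $q'(x)=h(x)-h(1)-h'(1)(x-1)$ for $h(x)=(m-1)x^{1/(m-1)}-(m-1)$ and invokes convexity/concavity of $h$; since $h'(x)=x^{(2-m)/(m-1)}$, this is the same computation in slightly different packaging.
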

\begin{proof}
We consider the function 
\begin{align*} q(x) : &= \tilde{\Upsilon} (\log x) -\frac{1}{2} (x-1)^2\\
& = \frac{(m-1)^2}{m} x^\frac{m}{m-1} -(m-1)x+\frac{m-1}{m}-\frac{1}{2} (x-1)^2, \ x > 0.
\end{align*}
Clearly $q(1)=0$. Thus, for (i) and (ii) it is sufficient to show that $q^\prime \leq 0$ in $(0,1]$ if $m\ge 2$ and that $q^\prime \geq 0$ in $[1,\infty)$ for $m\in (1,2]$. Now, 
\begin{align*}
q^\prime (x) = (m-1) x^\frac{1}{m-1} -x -(m-2), \ x > 0.
\end{align*}
Setting $h(x) = (m-1) x^\frac{1}{m-1} -(m-1), \ x > 0$, we have 
\begin{align*}
h^\prime (x) = x^\frac{2-m}{m-1} \quad \text{and}\quad h^{\prime \prime} (x) = \frac{2-m}{m-1} x^\frac{3-2m}{m-1}.
\end{align*}
Consequently, $h^{\prime \prime} \geq 0 \ (h^{\prime \prime} \le 0)$ in $(0,\infty)$ for $m \in (1,2] \ (m\ge  2)$ and hence $h$ is convex (concave) on $(0,\infty)$ for $m \in (1,2] \ (m\ge  2)$. Using this, together with $h(1)=0$ and $h^\prime(1)=1$, we see
that
\begin{align*}
q^\prime (x) = h(x)-h(1)-h^\prime(1)(x-1) \geq (\leq) \; 0,\;x>0,
\end{align*}
for $m \in (1,2] \ (m\ge  2)$, which concludes the proof.
\end{proof}

\begin{bemerkung}
The factor $\frac{1}{2}$ in (\ref{UngleichungTildeUps}) and (\ref{UngleichungTildeUps2}) is optimal as $\frac{\tilde{\Upsilon} (\log x)}{(x-1)^2} \to \frac{1}{2} \ (x \to 1)$ for any $m >1$.
\end{bemerkung}



We now come to the main theorem of this section. For a connected finite graph with vertex set $X$ and symmetric edge
weights we define the distance $d(x_1,x_2)$ of two points $x_1, x_2\in X$ as the minimal
length of all paths connecting $x_1$ and $x_2$ within the given graph. Here by the length of a path we mean the total number
of involved edges. 
\begin{satz}
\label{SatzHarnack}
Let $m >1$ and $X\neq \emptyset$ be a finite set. Let $k: X \times X \to [0,\infty)$ be symmetric and such that the induced graph with vertex set $X$ is connected. Let further $L$ be the Laplace operator generated by $k$. 
Suppose that $u \colon [0,\infty) \times X \to (0,\infty)$ is $C^1$ in time and that the function $v=\frac{m}{m-1} u^{m-1}$  satisfies the differential Harnack estimate 
\begin{equation}
\partial_t v \geq (1-\lambda) \tilde{\Psi}_\Upsilon^{(m)} (v) -  \frac{\mu}{t}v\quad \text{ in } (0,\infty) \times X
\label{AssHarnack}
\end{equation}
with some constants $\lambda \in [0,1)$ and $\mu>0$.  Let $0<t_1<t_2$ and $x_1,x_2 \in X$. Then for any sequence of pairwise distinct points $(y_i)_{i=0,1,\dots,N}, \ N \in \mathbb{N},$ such that $y_0 = x_1$, $y_N = x_2$ and $k(y_{i-1},y_{i}) >0, \ i \in \{1,\dots,N\},$ there holds
\begin{equation}
t_1^\mu v(t_1,x_1) \leq t_2^\mu v(t_2,x_2) + \frac{2 N^2}{(1-\lambda)(\mu+1)\left(t_2-t_1 \right)^2} \sum_{j=1}^N \frac{\tau_j^{\mu+1}-\tau_{j-1}^{\mu+1}}{k(y_{j-1},y_j)},
\label{HarnackUngl}
\end{equation} 
where $\tau_i = t_1 + i \frac{t_2-t_1}{N}, \ i =0,1,\dots,N$. In particular,
\begin{equation}
t_1^\mu v(t_1,x_1) \leq t_2^\mu v(t_2,x_2) + \frac{2 d(x_1,x_2)^2 \left(t_2^{\mu+1}-t_1^{\mu+1} \right)}
{(1-\lambda)(\mu+1) k_{\min} \left(t_2-t_1 \right)^2},
\label{HarnackUngl2}
\end{equation} 
where $k_{\min}$ is the minimal (positive) weight of an edge in $X \times X$.
\end{satz}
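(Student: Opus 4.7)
The plan is to imitate the continuous Auchmuty--Bao integration-along-a-path argument in a piecewise manner, evolving in time at a vertex $y_{j-1}$ on each subinterval $[\tau_{j-1},\tau_j]$ and accounting for the spatial jump to $y_j$ at the instant $\tau_j$. First I would rewrite \eqref{AssHarnack} in the ``parabolic'' form
\[
\partial_t\bigl(t^\mu v(t,x)\bigr) \;=\; \mu t^{\mu-1}v + t^\mu\partial_t v \;\geq\; (1-\lambda)\,t^\mu\,\tilde{\Psi}_{\Upsilon}^{(m)}(v)(t,x),
\]
valid for every $x\in X$. Since $\tilde{\Psi}_{\Upsilon}^{(m)}(v)\geq 0$ by Lemma~\ref{LemmaEigTildePsi}(i), the map $t\mapsto t^\mu v(t,x)$ is nondecreasing, and integration over $[\tau_{j-1},\tau_j]$ yields the quantitative form
\[
\tau_j^\mu v(\tau_j, x) - \tau_{j-1}^\mu v(\tau_{j-1}, x) \;\geq\; (1-\lambda)\int_{\tau_{j-1}}^{\tau_j} t^\mu\,\tilde{\Psi}_{\Upsilon}^{(m)}(v)(t,x)\,dt.
\]

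Setting $T=\tau_j-\tau_{j-1}=(t_2-t_1)/N$, I next telescope
\[
t_1^\mu v(t_1,x_1) - t_2^\mu v(t_2,x_2) \;=\; \sum_{j=1}^N \bigl[\tau_{j-1}^\mu v(\tau_{j-1},y_{j-1}) - \tau_j^\mu v(\tau_j,y_j)\bigr],
\]
and on each segment insert the intermediate value $\tau_j^\mu v(\tau_j,y_{j-1})$ to split
\[
\tau_{j-1}^\mu v(\tau_{j-1},y_{j-1}) - \tau_j^\mu v(\tau_j,y_j) \;=\; \bigl[\tau_{j-1}^\mu v(\tau_{j-1},y_{j-1}) - \tau_j^\mu v(\tau_j, y_{j-1})\bigr] + \tau_j^\mu\delta_j,
\]
with $\delta_j=v(\tau_j,y_{j-1})-v(\tau_j,y_j)$. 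The first bracket is $\leq -(1-\lambda)\int_{\tau_{j-1}}^{\tau_j} t^\mu \tilde{\Psi}_{\Upsilon}^{(m)}(v)(t,y_{j-1})\,dt$ by the integrated Harnack just established, so the task reduces to proving, for each $j$,
\[
\tau_j^\mu\delta_j \;\leq\; (1-\lambda)\int_{\tau_{j-1}}^{\tau_j} t^\mu \tilde{\Psi}_{\Upsilon}^{(m)}(v)(t,y_{j-1})\,dt + \frac{2\int_{\tau_{j-1}}^{\tau_j} t^\mu\,dt}{(1-\lambda)\,T^2\,k(y_{j-1},y_j)}.
\]

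This last inequality is the technical heart of the proof. If $\delta_j\leq 0$ the estimate is trivial, so I assume $\delta_j>0$. Lemma~\ref{LemmaHarnack} then supplies a pointwise quadratic control of the form $\delta_j^{\,2} \leq \tfrac{2}{k(y_{j-1},y_j)}\tilde{\Psi}_{\Upsilon}^{(m)}(v)(\tau_j,z)$ for a suitable $z\in\{y_{j-1},y_j\}$, with the choice of $z$ dictated by the sign regime of $v(\tau_j,y_j)/v(\tau_j,y_{j-1})$ together with whether $m\in(1,2]$ (case~(i) of the Lemma, which needs argument $\geq 1$) or $m\geq 2$ (case~(ii), which needs argument $\leq 1$); in the regime where the Lemma forces $z=y_j$, I instead use the symmetric form of the segment split, evolving in time at $y_j$ and jumping at $\tau_{j-1}$. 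A Young inequality then balances this pointwise quadratic control of $\delta_j^{\,2}$ against the integrated $\tilde{\Psi}$ term in such a way that the pointwise quadratic piece is absorbed into the integrated one, while the complementary term produces exactly the constant $\tfrac{2}{(1-\lambda)T^2 k(y_{j-1},y_j)}$ in the second summand.

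Summing the per-segment bound over $j=1,\dots,N$ and using $\tau_j^{\mu+1}-\tau_{j-1}^{\mu+1}=(\mu+1)\int_{\tau_{j-1}}^{\tau_j} t^\mu\,dt$ together with $T=(t_2-t_1)/N$ yields \eqref{HarnackUngl}. To deduce \eqref{HarnackUngl2} I take $(y_j)$ to be a path of minimal length $N=d(x_1,x_2)$ (which exists by connectedness), estimate $k(y_{j-1},y_j)\geq k_{\min}$ uniformly, and telescope $\sum_j (\tau_j^{\mu+1}-\tau_{j-1}^{\mu+1}) = t_2^{\mu+1}-t_1^{\mu+1}$. The principal obstacle will be the Young step just described: Lemma~\ref{LemmaHarnack} gives only a pointwise bound on $\delta_j^{\,2}$ at the single time $\tau_j$, whereas its compensating partner from the differential Harnack is integrated over $[\tau_{j-1},\tau_j]$; matching these correctly---both in the time dependence and in the sign/range case distinction coupling the sign of $\delta_j$ to the value of $m$---is what should produce the explicit constant $2$ (in place of the continuous analogue's $1/4$) and, through the $1/k(y_{j-1},y_j)$ factor inherited from Lemma~\ref{LemmaHarnack}, the weight $1/k_{\min}$ appearing in \eqref{HarnackUngl2}.
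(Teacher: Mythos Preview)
Your overall scaffolding is correct: the rewriting $\partial_t(t^\mu v)\geq (1-\lambda)t^\mu\tilde{\Psi}_\Upsilon^{(m)}(v)$, the telescoping into segments $[\tau_{j-1},\tau_j]$, and the reduction of \eqref{HarnackUngl2} to \eqref{HarnackUngl} all match the paper. The problem is the per-segment core estimate.

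You fix the spatial jump at the \emph{endpoint} $\tau_j$, so that $\delta_j=v(\tau_j,y_{j-1})-v(\tau_j,y_j)$ is a single number, and then hope to absorb $\tau_j^\mu\delta_j$ into the integral $(1-\lambda)\int_{\tau_{j-1}}^{\tau_j} t^\mu\tilde{\Psi}_\Upsilon^{(m)}(v)\,dt$ via a Young inequality. But Lemma~\ref{LemmaHarnack} only controls $\delta_j^{\,2}$ by $\tilde{\Psi}_\Upsilon^{(m)}(v)$ \emph{at the single instant} $\tau_j$, while the term you want to absorb it into is an integral over the whole subinterval. There is no mechanism linking these: $\tilde{\Psi}_\Upsilon^{(m)}(v)(t,\cdot)$ can be tiny on $[\tau_{j-1},\tau_j)$ and $\delta_j$ large at $\tau_j$, and nothing in the differential Harnack prevents this. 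No Young-type splitting can bridge a pointwise quantity at $\tau_j$ and an integrated quantity over $[\tau_{j-1},\tau_j]$ without such a link, so the step you flag as ``the principal obstacle'' is in fact a genuine gap, not just a technicality.

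The paper's remedy is to make the jump time a \emph{free} parameter $s\in[\tau_{j-1},\tau_j]$. One writes, for $N=1$ and general $s$,
\[
t_1^\mu v(t_1,x_1)-t_2^\mu v(t_2,x_2)
= -\int_{t_1}^s \partial_t(t^\mu v)(t,x_1)\,dt + \delta(s) - \int_s^{t_2}\partial_t(t^\mu v)(t,x_2)\,dt,
\]
with $\delta(s)=s^\mu\bigl(v(s,x_1)-v(s,x_2)\bigr)$ a \emph{function} of $s$. Dropping one of the two integrals (the choice depends on whether $m\in(1,2]$ or $m\geq 2$, exactly matching the two cases of Lemma~\ref{LemmaHarnack}) and applying that lemma inside the surviving integral yields an expression of the form $\delta(s)-c\int \tau^{-\mu}\delta(\tau)^2\,d\tau$. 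The crucial extra ingredient is Lemma~\ref{LemmaHarnack2}, which bounds the \emph{minimum over $s$} of such an expression by $\frac{c}{\mu+1}\frac{t_2^{\mu+1}-t_1^{\mu+1}}{(t_2-t_1)^2}$; this is what produces the constant $2/[(1-\lambda)(\mu+1)k]$ and is the piece your outline is missing. After the $N=1$ case is established this way, the general case follows by your telescoping exactly as you wrote.
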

The following lemma will be a central tool in the proof. It can be seen as an analogue to \cite[Lemma 5.3]{BHL}.
\begin{lemma}
\label{LemmaHarnack2}
Let $0<t_1<t_2$ and $c,\nu>0$. Then for any continuous function $\psi: [t_1,t_2] \to \R$ we have
\begin{align}
\min_{s \in [t_1,t_2]} &\Big(\psi (s) -\frac{1}{c} \int_{s}^{t_2} \tau^{-\nu} \psi^2 (\tau) d\tau\Big) \leq
\frac{c}{\nu+1} \frac{t_2^{\nu+1}-t_1^{\nu+1}}{ (t_2-t_1)^2}
\label{LemmaHarnackUGL1}
\end{align}
and
\begin{align}
\min_{s \in [t_1,t_2]} &\Big(\psi (s) -\frac{1}{c} \int_{t_1}^{s} \tau^{-\nu} \psi^2 (\tau) d\tau\Big) 
\leq \frac{c}{\nu+1}  \frac{t_2^{\nu+1}-t_1^{\nu+1}}{(t_2-t_1)^2}.
\label{LemmaHarnackUGL2}
\end{align}
\end{lemma}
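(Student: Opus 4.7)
The plan is to bound $\min_{s \in [t_1,t_2]} f(s)$, where $f(s)$ denotes the bracketed expression in (\ref{LemmaHarnackUGL1}), by a weighted average $\int_{t_1}^{t_2} w(s) f(s)\, ds$ against a probability weight $w \ge 0$ on $[t_1,t_2]$, and to then absorb the $\psi$-dependence via Cauchy-Schwarz followed by AM-GM. I would treat (\ref{LemmaHarnackUGL1}) in detail; (\ref{LemmaHarnackUGL2}) follows from a mirrored version of the same argument.

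Starting from the tautology $\min_s f(s) \leq \int_{t_1}^{t_2} w(s) f(s)\,ds$, valid for any continuous $w \ge 0$ with $\int_{t_1}^{t_2} w = 1$, and swapping the order of integration via Fubini's theorem in the $\tau^{-\nu}\psi^2$-integral, one arrives at
\[
\int_{t_1}^{t_2} w(s) f(s)\,ds \;=\; \int_{t_1}^{t_2} w \psi\, d\tau \;-\; \frac{1}{c}\, I_2, \qquad I_2 := \int_{t_1}^{t_2} W(\tau)\, \tau^{-\nu}\, \psi(\tau)^2\, d\tau,
\]
where $W(\tau) := \int_{t_1}^\tau w(s)\,ds$. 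The first term is then controlled by the Cauchy-Schwarz split
\[
\int_{t_1}^{t_2} w\psi\, d\tau \;\leq\; \sqrt{I_1}\, \sqrt{I_2}, \qquad I_1 := \int_{t_1}^{t_2} \frac{w(\tau)^2\, \tau^\nu}{W(\tau)}\, d\tau,
\]
and the AM-GM inequality $\sqrt{I_1 I_2} \leq \frac{c}{4} I_1 + \frac{1}{c} I_2$ (which is the identity $(\frac{c}{4} I_1 + \frac{1}{c} I_2) - \sqrt{I_1 I_2} = \frac{1}{c}(\frac{c}{2}\sqrt{I_1} - \sqrt{I_2})^2 \geq 0$) forces the $I_2$-contributions to cancel exactly, leaving the universal $\psi$-free bound $\min_s f(s) \leq \frac{c}{4}\, I_1$.

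The final step is to choose the weight $w$ so that $\frac{c}{4}\, I_1$ matches the target constant. Taking $w(\tau) = 2(\tau-t_1)/(t_2-t_1)^2$ and correspondingly $W(\tau) = (\tau-t_1)^2/(t_2-t_1)^2$, the ratio $w^2/W = 4/(t_2-t_1)^2$ is a constant; hence $I_1 = 4(t_2^{\nu+1}-t_1^{\nu+1})/((\nu+1)(t_2-t_1)^2)$, and $(c/4) I_1$ reproduces the claimed right-hand side on the nose. The main subtlety will be spotting this specific weight: the linear factor $(\tau-t_1)$ in $w$ is forced by the need to cancel the $1/W$ singularity at $\tau = t_1$ in the Cauchy-Schwarz integrand (a weight vanishing more slowly than linearly at $t_1$ would make $I_1$ diverge), and the overall constant is then pinned down by the normalization $\int_{t_1}^{t_2} w = 1$. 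For (\ref{LemmaHarnackUGL2}) the same argument runs with the reflected weight $w(\tau) = 2(t_2-\tau)/(t_2-t_1)^2$ and $W(\tau) := \int_\tau^{t_2} w(s)\,ds = (t_2-\tau)^2/(t_2-t_1)^2$; Fubini on $\int_{t_1}^{t_2} w(s) \int_{t_1}^s \tau^{-\nu}\psi^2\, d\tau\, ds$ produces the same $I_2$, and the calculation of $I_1$ is identical.
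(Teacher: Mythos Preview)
Your proof is correct and follows essentially the same route as the paper: the paper also averages against the linear weight $\eta_1(t)=\tfrac{2}{c}(t-t_1)$ (your $w$ up to normalization), applies Fubini to produce the primitive $\int_{t_1}^t \eta_1$ in front of $\tau^{-\nu}\psi^2$, and then eliminates $\psi$ via the pointwise quadratic bound $x-\tfrac14 t^{-\nu}x^2\le t^\nu$, which is exactly your Cauchy--Schwarz/AM--GM step rewritten as a pointwise completion of the square. The mirrored weight for (\ref{LemmaHarnackUGL2}) is likewise the paper's $\eta_2(t)=\tfrac{2}{c}(t_2-t)$.
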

\begin{proof}
Let the functions $\eta_1, \eta_2$ be defined by $\eta_1 (t) := \frac{2}{c} (t-t_1)$ and $\eta_2 (t) := \frac{2}{c} (t_2-t)$. Then
\begin{align*}
\min_{s \in [t_1,t_2]} &\Big(\psi (s) - \frac{1}{c} \int_s^{t_2} \tau^{-\nu} \psi^2 (\tau) d\tau\Big) \leq \frac{\int_{t_1}^{t_2} \Big(\eta_1 (t) \psi (t) - \frac{1}{c}  \int_t^{t_2} \eta_1 (t) \tau^{-\nu} \psi^2 (\tau) d\tau\Big) dt}{\int_{t_1}^{t_2} \eta_1 (t) dt} \\ &= \frac{c}{(t_2-t_1)^2} \int_{t_1}^{t_2} \Big(\eta_1 (t) \psi (t) - \frac{1}{c} t^{-\nu} \psi^2 (t) \int_{t_1}^{t} \eta_1 (\tau) d\tau\Big) dt \\ &= \frac{c}{(t_2-t_1)^2} \int_{t_1}^{t_2} \Big(\eta_1 (t) \psi (t) - \frac{1}{4} t^{-\nu} \big(\eta_1 (t) \psi(t)\big)^2 \Big) dt. 
\end{align*}
Similarly,
\begin{align*}
\min_{s \in [t_1,t_2]} &\Big(\psi (s) - \frac{1}{c} \int_{t_1}^{s} \tau^{-\nu} \psi^2 (\tau) d\tau\Big) \leq \frac{\int_{t_1}^{t_2} \Big(\eta_2 (t) \psi (t) - \frac{1}{c}  \int_{t_1}^{t} \eta_2 (t) \tau^{-\nu} \psi^2 (\tau) d\tau\Big) dt}{\int_{t_1}^{t_2} \eta_2 (t) dt} \\ &= \frac{c}{(t_2-t_1)^2} \int_{t_1}^{t_2} \Big(\eta_2 (t) \psi (t) - \frac{1}{c} t^{-\nu} \psi^2 (t) \int_{t}^{t_2} \eta_2 (\tau) d\tau\Big) dt \\ &= \frac{c}{(t_2-t_1)^2} \int_{t_1}^{t_2} \Big(\eta_2 (t) \psi (t) - \frac{1}{4} t^{-\nu} \big(\eta_2 (t) \psi(t)\big)^2 \Big) dt. 
\end{align*}
Now, analysing the function $f(x) = x- \frac{1}{4} t^{-\nu} x^2$, one can show that 
\begin{align*}
f(x) \leq t^\nu, \ x \in \R.
\end{align*} 
Thus, 
\begin{align*}
\min_{s \in [t_1,t_2]} \Big(\psi (s) - \frac{1}{c} \int_s^{t_2} \tau^{-\nu} \psi^2 (\tau) d\tau\Big) &\leq \frac{c}{ (t_2-t_1)^2} \int_{t_1}^{t_2} t^\nu dt = \frac{c}{\nu+1} \frac{t_2^{\nu+1}-t_1^{\nu+1}}{(t_2-t_1)^2}
\end{align*}
and
\begin{align*}
\min_{s \in [t_1,t_2]}\Big( \psi (s) - \frac{1}{c} \int_{t_1}^{s} \tau^{-\nu} \psi^2 (\tau) d\tau\Big) &\leq \frac{c}{ (t_2-t_1)^2} \int_{t_1}^{t_2} t^\nu dt = \frac{c}{\nu+1} \frac{t_2^{\nu+1}-t_1^{\nu+1}}{(t_2-t_1)^2}.
\end{align*}
\end{proof}



\begin{proof}[Proof of Theorem \ref{SatzHarnack}]
Multiplying (\ref{AssHarnack}) by $t^\mu$, we find
\begin{align*}
t^\mu \partial_t v +\mu t^{\mu-1} v \geq (1-\lambda) t^\mu \tilde{\Psi}_\Upsilon^{(m)} (v),
\end{align*}
which is equivalent to 
\begin{align}
\partial_t \big(t^\mu v \big) \geq (1-\lambda) t^\mu \tilde{\Psi}_\Upsilon^{(m)} (v).
\label{BewHarnackDtv}
\end{align}

We first consider the case $N=1$. Let $0 < t_1 < t_2$ and $s \in J := [t_1, t_2]$. Then we may write
\begin{align*}
t_1^\mu v(&t_1,x_1) - t_2^\mu v(t_2,x_2) \\ &= t_1^\mu v(t_1,x_1) -s^\mu v(s,x_1) +s^\mu v(s,x_1) -s^\mu v(s,x_2) +s^\mu v(s,x_2) -t_2^\mu v(t_2,x_2) \\&= - \int_{t_1}^s \partial_t \big( t^\mu v (t,x_1)\big) dt + s^\mu v(s,x_1) -s^\mu v(s,x_2) - \int_s^{t_2} \partial_t \big(t^\mu v (t,x_2)\big) dt =: (*).
\end{align*}
Define $\delta (t) := t^\mu \big(v(t,x_1)-v(t,x_2)\big), \ t \in J$. We distinguish two cases. Let first $m \in (1,2]$. Then by (\ref{BewHarnackDtv})
\begin{align*}
(*) &\leq \delta (s) - (1-\lambda) \int_s^{t_2} t^\mu \tilde{\Psi}_\Upsilon^{(m)} (v) (t,x_2) dt \\ &\leq \delta (s) - (1-\lambda) k(x_1,x_2) \int_s^{t_2} t^\mu v^2 (t,x_2) \tilde{\Upsilon} \Big( \log \frac{v(t,x_1)}{v(t,x_2)}\Big) dt.
\end{align*}
We choose $s \in J$ such that the continuous function $\omega_1$ defined by
\begin{align*}
\omega_1 (t) := \delta (t) -(1-\lambda) k(x_1,x_2) \int_t^{t_2} \tau^\mu v^2 (\tau,x_2) \tilde{\Upsilon} \Big( \log \frac{v(\tau,x_1)}{v(\tau,x_2)}\Big) d\tau
\end{align*}
attains its minimum at $s$. Suppose that $\omega_1 (s) \geq 0$. Then the positivity of $\tilde{\Upsilon}$ implies that $\delta (t) \geq 0, \ t \in J$. Thus, by Lemma \ref{LemmaHarnack}, it follows that
\begin{align*}
\omega_1 (t) &\leq \delta (t) - \frac{(1-\lambda) k(x_1,x_2)}{2} \int_t^{t_2} \tau^\mu \big(v(\tau,x_1)-v(\tau,x_2)\big)^2 d\tau \\&= \delta (t) - \frac{(1-\lambda) k(x_1,x_2)}{2} \int_t^{t_2} \tau^{-\mu} \delta(\tau)^2 d\tau =: \tilde{\omega}_1 (t).
\end{align*}
By (\ref{LemmaHarnackUGL1}), we then have
\begin{align*}
\min_{t \in J} \tilde{\omega}_1 (t) \leq \frac{2}{(1-\lambda) k(x_1,x_2)(\mu+1)} \frac{t_2^{\mu+1}-t_1^{\mu+1}}{(t_2-t_1)^2}
\end{align*}
and as $\omega_1 (s) \leq \min_{t \in J} \tilde{\omega}_1 (t)$ we infer that
\begin{align*}
\omega_1 (s) \leq \frac{2}{(1-\lambda) k(x_1,x_2)(\mu+1)} \frac{t_2^{\mu+1}-t_1^{\mu+1}}{(t_2-t_1)^2}.
\end{align*}
Note that this estimate is trivial when $\omega_1 (s) <0$. With the estimate above, this finally yields
\begin{align*}
t_1^\mu v(&t_1,x_1) - t_2^\mu v(t_2,x_2) \leq \frac{2}{(1-\lambda) k(x_1,x_2)(\mu+1)} \frac{t_2^{\mu+1}-t_1^{\mu+1}}{(t_2-t_1)^2}.
\end{align*}

If $m \in (2,\infty)$ we estimate
\begin{align*}
(*) &\leq \delta (s) - (1-\lambda) \int_{t_1}^{s} t^\mu \tilde{\Psi}_\Upsilon^{(m)} (v) (t,x_1) dt \\ &\leq \delta (s) - (1-\lambda) k(x_1,x_2) \int_{t_1}^{s} t^\mu v^2 (t,x_1) \tilde{\Upsilon} \Big( \log \frac{v(t,x_2)}{v(t,x_1)}\Big) dt.
\end{align*}
In this case, we choose $s \in J$ such that the continuous function $\omega_2$ defined by
\begin{align*}
\omega_2(t) := \delta (t) -(1-\lambda) k(x_1,x_2) \int_{t_1}^{t} \tau^\mu v^2 (\tau,x_1) \tilde{\Upsilon} \Big( \log \frac{v(\tau,x_2)}{v(\tau,x_1)}\Big) d\tau
\end{align*}
assumes its minimum at $s$. Supposing that $\omega_2 (s) \geq 0$ we have $\delta (t) \geq 0, \ t \in J$, by positivity of $\tilde{\Upsilon}$. Consequently, the second part of Lemma \ref{LemmaHarnack} gives
\begin{align*}
\omega_2 (t) &\leq \delta (t) - \frac{(1-\lambda) k(x_1,x_2)}{2} \int_{t_1}^{t} \tau^\mu \big(v(\tau,x_1)-v(\tau,x_2)\big)^2 d\tau \\&= \delta (t) - \frac{(1-\lambda) k(x_1,x_2)}{2} \int_{t_1}^{t} \tau^{-\mu} \delta(\tau)^2 d\tau =: \tilde{\omega}_2 (t).
\end{align*}
By (\ref{LemmaHarnackUGL2}), we then have
\begin{align*}
\min_{t \in J} \tilde{\omega}_2 (t) \leq \frac{2}{(1-\lambda) k(x_1,x_2)(\mu+1)} \frac{t_2^{\mu+1}-t_1^{\mu+1}}{(t_2-t_1)^2}.
\end{align*}
and as $\omega_2 (s) \leq \min_{t \in J} \tilde{\omega}_2 (t)$, it follows that
\begin{align*}
\omega_2 (s) \leq \frac{2}{(1-\lambda) k(x_1,x_2)(\mu+1)} \frac{t_2^{\mu+1}-t_1^{\mu+1}}{(t_2-t_1)^2},
\end{align*}
which clearly also holds when $\omega_2 (s) <0$. Again, this yields
\begin{align*}
t_1^\mu v(&t_1,x_1) - t_2^\mu v(t_2,x_2) \leq \frac{2}{(1-\lambda) k(x_1,x_2)(\mu+1)} \frac{t_2^{\mu+1}-t_1^{\mu+1}}{(t_2-t_1)^2}.
\end{align*}
This shows the statement for $N=1$. 

For the case $N>1$ consider a sequence of pairwise distinct points $(y_i)_{\{i=0,1,\dots,N\}}$ such that $y_0 = x_1$, $y_N = x_2$ and $k(y_{l-1},y_{l}) >0, \ l \in \{1,\dots,N\}$. Defining the times $\tau_i = t_1 + i \frac{t_2-t_1}{N}, \ i =0,1,\dots,N,$ and employing the result for $N=1$, we obtain
\begin{align*}
t_1^\mu &v(t_1,x_1) - t_2^\mu v(t_2,x_2) = \sum_{j=1}^N \big( \tau_{j-1}^\mu v(\tau_{j-1},y_{j-1}) - \tau_{j}^\mu v(\tau_{j},y_{j})\big) \\&\leq \frac{2}{(1-\lambda) (\mu+1)} \sum_{j=1}^N \frac{\tau_j^{\mu+1}-\tau_{j-1}^{\mu+1}}{k(y_{j-1},y_{j}) (\tau_j-\tau_{j-1})^2} \\ &= \frac{2 N^2}{(1-\lambda) (\mu+1)(t_2-t_1)^2} \sum_{j=1}^N \frac{\tau_j^{\mu+1}-\tau_{j-1}^{\mu+1}}{k(y_{j-1},y_j)}.
\end{align*}
The inequality (\ref{HarnackUngl2}) now follows easily, as
\begin{align*}
\sum_{j=1}^N \frac{\tau_j^{\mu+1}-\tau_{j-1}^{\mu+1}}{k(y_{j-1},y_j)} \leq \frac{1}{k_{\min}} \sum_{j=1}^N \left( \tau_j^{\mu+1}-\tau_{j-1}^{\mu+1} \right) = \frac{t_2^{\mu+1}-t_1^{\mu+1}}{k_{\min}}.
\end{align*}

Finally, observe that the assertion \eqref{HarnackUngl2} is also valid in the case $x_1=x_2$.
\end{proof}
\begin{korollar}
\label{KorollarHarnack}
Let $m>1$ and $L$ be as in Theorem \ref{SatzHarnack}. Suppose that $L$ satisfies $CD_{m,\alpha} (0,d)$ with some $d>0$ and $\alpha \in [0,1)$. If $u \colon [0,\infty) \times X \to (0,\infty)$ is a solution to the PME $\partial_t u - Lu^m = 0$ on $(0,\infty) \times X$, then for $v=\frac{m}{m-1} u^{m-1}$ the estimates (\ref{HarnackUngl}) and (\ref{HarnackUngl2}) hold 
true with $\mu = (m-1) d$ and $\lambda = \alpha$.
\end{korollar}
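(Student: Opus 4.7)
The proof will be a short chaining argument: transform the Aronson--Bénilan estimate into the differential Harnack form (\ref{AssHarnack}), then invoke Theorem \ref{SatzHarnack} directly. The hypothesis $\alpha\in[0,1)$ (as opposed to $\alpha=1$) is exactly what makes this chaining work, since it guarantees a non-degenerate gradient term $\tilde{\Psi}_\Upsilon^{(m)}(v)$ in the resulting differential Harnack inequality.

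The plan is as follows. First, since $L$ satisfies $CD_{m,\alpha}(0,d)$ and $u$ solves the discrete PME, Theorem \ref{ThmAronsonBen} applies and yields
\[
-\Big(Lv + \alpha \,\frac{\tilde{\Psi}_\Upsilon^{(m)}(v)}{(m-1)v}\Big) \leq \frac{d}{t}\quad \text{on } (0,\infty)\times X.
\]
Multiplying through by the positive quantity $(m-1)v$ produces
\[
-(m-1)vLv - \alpha\,\tilde{\Psi}_\Upsilon^{(m)}(v) \leq \frac{(m-1)dv}{t}.
\]

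Next, I would use the pressure equation \eqref{DiscreteChainRuleV} from Proposition \ref{basicprop}(i), namely $(m-1)vLv = \partial_t v - \tilde{\Psi}_\Upsilon^{(m)}(v)$, to eliminate the $Lv$-term. Substituting and rearranging gives
\[
\partial_t v \geq (1-\alpha)\,\tilde{\Psi}_\Upsilon^{(m)}(v) - \frac{(m-1)d\,v}{t}\quad \text{on } (0,\infty)\times X,
\]
which is precisely the hypothesis \eqref{AssHarnack} of Theorem \ref{SatzHarnack} with $\lambda=\alpha\in[0,1)$ and $\mu=(m-1)d>0$.

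Finally, since $L$ was assumed to be as in Theorem \ref{SatzHarnack} (connected, symmetric, induced graph locally finite on the finite set $X$), and $u$ is $C^1$ in time, that theorem applies and yields the estimates \eqref{HarnackUngl} and \eqref{HarnackUngl2} with the stated parameters. There is really no technical obstacle here; the only point worth highlighting is why the borderline case $\alpha=1$ is excluded, namely that it would force $1-\lambda=0$ and collapse the gradient term needed to control the distance-dependent remainder in Theorem \ref{SatzHarnack}.
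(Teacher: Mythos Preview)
Your argument is correct and matches the paper's approach. The paper's proof is even more terse: it simply cites Theorem \ref{ThmAronsonBen}, whose conclusion \eqref{LiYauInequ2} is already equivalent (after multiplying by $(m-1)v>0$) to the hypothesis \eqref{AssHarnack} with $\lambda=\alpha$ and $\mu=(m-1)d$, and then invokes Theorem \ref{SatzHarnack}. Your intermediate step of substituting the pressure equation \eqref{DiscreteChainRuleV} into \eqref{LiYauInequ1} is precisely how \eqref{LiYauInequ2} was obtained inside the proof of Theorem \ref{ThmAronsonBen}, so you are just unpacking what the paper packages into that theorem.
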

\begin{proof}
By Theorem \ref{ThmAronsonBen}, $L$ satisfies (\ref{AssHarnack}) with $\mu = (m-1)d$ and $\lambda = \alpha$.
\end{proof}

\begin{bemerkung} 
\emph{(Comparison with the heat equation)} The Harnack inequalities (\ref{HarnackUngl}) and (\ref{HarnackUngl2}) are closely related to the ones from \cite{DKZ} and \cite{KWZ}, where the authors considered solutions to the discrete heat equation, i.e. equation (\ref{PME}) with $m=1$. Suppose that $L$ satisfies $CD_{m} (0,d)$ with some $C^1$-function $d = d(m)>0$, $m\ge 1$. As we saw in Remark \ref{BemerkungCDBed}, $CD_1 (0,d(1))$ is equivalent to the condition $CD(F;0)$ from \cite{DKZ} with a quadratic $CD$-function $F$. In \cite{DKZ} it has been shown that under this condition the Harnack estimate 
\begin{align}
\label{HarnackDKZ}
u(t_1,x_1) \leq u(t_2,x_2) \left( \frac{t_2}{t_1} \right) ^{d(1)} \exp\left(\frac{2 d(x_1,x_2)^2}{k_{\min} (t_2-t_1)} \right)
\end{align}
holds for positive solutions $u$ to the heat equation, see \cite[Theorem 6.1]{DKZ}. Here, we adapted the result to our setting and the notation from Theorem \ref{SatzHarnack}. This is exactly what one gets when taking the limit $m \to 1$ in (\ref{HarnackUngl2}).

Indeed, by Corollary \ref{KorollarHarnack} with $\alpha=0$ we have $\mu = \mu(m) =(m-1) d(m)$ satisfying $\mu(1) =0$. Note that $\mu$ is $C^1$ with $\mu'(1)=d(1)$. Now, letting $m \to 1$ in (\ref{HarnackUngl}), we deduce the corresponding statement for the heat equation recently stated in \cite[Theorem 6.1]{KWZ}. Indeed, we have 
\begin{align*}
t_1^\mu v(t_1,x_1) - t_2^\mu v(t_2,x_2) &= \frac{m t_1^{(m-1)d(m)} u^{m-1} (t_1,x_1) -mt_2^{(m-1)d(m)} u^{m-1}(t_2,x_2)}{m-1} \\ &\to d(1) \log \frac{t_1}{t_2} + \log \frac{u(t_1,x_1)}{u(t_2,x_2)} \quad \mbox{as}\;m \to 1,
\end{align*}
by l'H\^{o}spital's rule, and thus as $m \to 1$, (\ref{HarnackUngl}) becomes
\begin{align*}
\log \frac{u(t_1,x_1)}{u(t_2,x_2)} \leq \log \left( \frac{t_2}{t_1} \right)^{d(1)} + \frac{2N}{t_2-t_1} \sum_{j=1}^N \frac{1}{k(y_{j-1},y_j)},
\end{align*}
which is the result from \cite[Theorem 6.1]{KWZ} for a quadratic $CD$-function $F$. Choosing a path with
$N=d(x_1,x_2)$ and as 
\begin{align*}
\sum_{j=1}^N \frac{1}{k(y_{j-1},y_j)} \leq \frac{N}{k_{\min}},
\end{align*}
we finally deduce inequality (\ref{HarnackDKZ}).
\end{bemerkung}

\begin{beispiel}
(i) \emph{The square for m=2.} Let $X$ and $L$ be as in Section \ref{SectionSquare}. From there, we know that $L$ satisfies $CD_2 \big(0, \frac{4}{3}\big)$. Let now $u$ be a positive solution of $\partial_t u - Lu^2 =0$ on $(0,\infty) \times X$ and $v = 2u$. Then, for $0<t_1<t_2$ and $x_1,x_2 \in X$ Corollary \ref{KorollarHarnack} yields
\begin{align*}
t_1^\frac{4}{3} u(t_1,x_1) \leq t_2^\frac{4}{3} u(t_2,x_2) + \frac{7 d(x_1,x_2)^2 \Big(t_2^\frac{7}{3} -t_1^\frac{7}{3}\Big)}{3 (t_2-t_1)^2}.
\end{align*}

(ii) \emph{Complete graphs.} We consider the unweighted complete graph with $D \in \N$ vertices from Example \ref{BspVollstGraph}. In Example \ref{BspLiYau} (ii), we saw that for any $m>1$ the operator $L$ satisfies condition (\ref{AssHarnack}) with $\mu = \frac{m}{m-1}$ and $\lambda=0$ for any positive solution $u$ of $\partial_t u -Lu^m = 0$ on $(0,\infty) \times X$ . We thus find for the pressure function $v$ that
\begin{align*}
t_1^\frac{m}{m-1} v(t_1,x_1) \leq t_2^\frac{m}{m-1} v(t_2,x_2) + \frac{2(m-1)d(x_1,x_2)^2 \Big(t_2^\frac{2m-1}{m-1} -t_1^\frac{2m-1}{m-1}\Big)}{(2m-1)(t_2-t_1)^2}.
\end{align*}
Note that $d(x_1,x_2)=1$ whenever $x_1\neq x_2$.
For $m\leq 2$ this estimate can be improved. For example, when $m=2$, we know that (\ref{AssHarnack}) holds with $\mu = \frac{2(D-1)}{D}$ and $\lambda=0$ for any positive solution to $\partial_t u -Lu^2 = 0$ on $(0,\infty) \times X$. For $0<t_1<t_2$ and $x_1,x_2 \in X$, we then get
\begin{align*}
t_1^\frac{2(D-1)}{D} v(t_1,x_1) \leq t_2^\frac{2(D-1)}{D} v(t_2,x_2) + \frac{2D 
d(x_1,x_2)^2\Big(t_2^\frac{3D-2}{D} -t_1^\frac{3D-2}{D}\Big)}{(3D-2)(t_2-t_1)^2}.
\end{align*}
\end{beispiel}


$\mbox{}$
{\footnotesize

$\mbox{}$


}

\end{document}